\theoremstyle{plain}
\newtheorem{thm}{Theorem}[section] 
\newtheorem{cor}[thm]{Corollary}
\newtheorem{prop}[thm]{Proposition}
\newtheorem{lem}[thm]{Lemma}
\newtheorem*{mainthm}{Main Theorem}
\theoremstyle{definition} 
\newtheorem{defn}[thm]{Definition}
\newtheorem{setting}[thm]{Setting}
\newtheorem{eg}[thm]{Example} 
\newtheorem{question}[thm]{Question}
\newtheorem*{convention}{Convention}
\theoremstyle{remark}
\newtheorem{rem}[thm]{Remark}
\newtheorem*{acknowledgement}{Acknowledgments}
\def\ge{\geqslant}
\def\le{\leqslant}
\def\phi{\varphi}
\def\epsilon{\varepsilon}
\def\to{\longrightarrow}
\def\mapsto{\longmapsto}
\newcommand{\sO}{\mathcal{O}}
\newcommand{\Q}{\mathbb{Q}} 
\newcommand{\C}{\mathbb{C}} 
\newcommand{\Z}{\mathbb{Z}}
\newcommand{\PP}{\mathbb{P}}
\newcommand{\m}{\mathfrak{m}}
\newsavebox{\circlebox}
\savebox{\circlebox}{\fontencoding{OMS}\selectfont\Large\char13}
\newlength{\circleboxwdht}
\def\Spec{\operatorname{Spec}}
\def\Supp{\operatorname{Supp}}
\def\Pic{\operatorname{Pic}}
\title{General hyperplane sections of threefolds\\ in positive characteristic}
\author{Kenta Sato}
\address{Graduate School of Mathematical Sciences, University of Tokyo, 3-8-1 Komaba, Meguro-ku, Tokyo 153-8914, Japan}
\email{ktsato@ms.u-tokyo.ac.jp}
\author{Shunsuke Takagi}
\address{Graduate School of Mathematical Sciences, University of Tokyo, 3-8-1 Komaba, Meguro-ku, Tokyo 153-8914, Japan}
\email{stakagi@ms.u-tokyo.ac.jp}
\thanks{}
\keywords{Bertini theorem, canonical singularities, klt singularities, MJ-canonical singularities, strongly $F$-regular singularities}
\subjclass[2010]{14B05, 14J17, 13A35}
\dedicatory{Dedicated to Professor~Lawrence~Ein on the occasion of his sixty birthday.}
\begin{document}

\begin{abstract}
In this paper, we study the singularities of a general hyperplane section $H$ of a three-dimensional quasi-projective variety $X$ over an algebraically closed field of characteristic $p>0$. 
We prove that if $X$ has only canonical singularities, then $H$ has only rational double points. 
We also prove, under the assumption that $p>5$, that if $X$ has only klt singularities, then so does $H$. 
\end{abstract}

\maketitle
\markboth{K.~SATO and S.~TAKAGI}{GENERAL HYPERPLANE SECTIONS OF THREEFOLDS}

%%%%%%%%%%%%%%%%%%%%%%%%%%%%%%%%%%%%%%%%%%%%%%%%%%%%%%%%%%%%%%%%%%%%%%%%%%%%%%
\section{Introduction}
%%%%%%%%%%%%%%%%%%%%%%%%%%%%%%%%%%%%%%%%%%%%%%%%%%%%%%%%%%%%%%%%%%%%%%%%%%%%%%

Reid proved in his historic paper \cite{Re} that if a quasi-projective variety over an algebraically closed field of characteristic zero has only canonical singularities, then its general hyperplane section has only canonical singularities, too.  
This can be deduced from the Bertini theorem for base point free linear systems, and the same argument works for other classes of singularities in the minimal model program such as terminal, klt, log canonical singularities. 
Then what if the variety is defined over an algebraically closed field of positive characteristic? 
As a test case, we consider the following question: 

\begin{question}\label{main question}
Let $X$ be a three-dimensional quasi-projective variety over an algebraically closed field $k$ of positive characteristic and $H$ be a general hyperplane section of $X$. 
If $X$ has only terminal singularities, is $H$ smooth? 
If $X$ has only canonical $($resp. klt, log canonical$)$ singularities, does $H$ have only rational double points $($resp. klt singularities, log canonical singularities$)$? 
\end{question}

Since the Bertini theorem for base point free linear systems fails in positive characteristic, 
Reid's argument does not work in this setting.  
Thanks to the recent development in birational geometry, we are able to find a resolution of singularities of $X$ (\cite{CP1}, \cite{CP2}) and to run the minimal model program if $X$ has only klt singularities and the characteristic of $k$ is larger than 5 (\cite{HX}, \cite{Bi}, \cite{BW}), but we do not know in general how to overcome the difficulty arising from the lack of the Bertini theorem. 

The terminal case in Question \ref{main question} is proved affirmatively by the fact that three-dimensional terminal singularities are isolated singularities. 
The other cases are much more subtle.  
For example, Reid proved, as a corollary of the result mentioned above, that all closed points of a complex canonical threefold, except finitely many of them,  have an analytic neighborhood which is nonsingular or isomorphic to the product of a rational double point and $\mathbb{A}^1_{\C}$. 
This does not hold in positive characteristic: Hirokado-Ito-Saito \cite{HIS} and Hirokado \cite{Hi} gave counterexamples in characteristic two and three. 
However, the canonical case in Question \ref{main question} has remained open. 

In this paper, we give an affirmative answer to the canonical and klt cases in Question \ref{main question}, using jet schemes and $F$-singularities, while we need to assume that the characteristic of $k$ is larger than 5 in the klt case. 
In both cases, we look at every codimension two point $x$ of $X$. The normal surface singularity $\Spec \sO_{X, x}$ is canonical or klt but is not defined over an algebraically closed field. 
In the canonical case, we observe, as in the case where the base field is algebraically closed, that $\Spec \sO_{X, x}$ is a hypersurface singularity. 
In the klt case, we deduce from the following theorem, which is a generalization of a result of Hara \cite{Ha}, that $\Spec \sO_{X, x}$ is strongly $F$-regular when the characteristic of $k$ is larger than 5. 

\begin{thm}[Proposition \ref{F-reg klt}, Theorem \ref{klt F-reg}]\label{key thm}
Let $(s \in S)$ be an $F$-finite normal surface singularity $($not necessarily defined over an algebraically closed field$)$ of characteristic $p>5$ and $B$ be an effective $\Q$-divisor on $S$ whose coefficients belong to the standard set $\{1-1/m \, | \, m \in \Z_{\ge 1}\}$. 
Then $(s \in S, B)$ is klt if and only if $(s \in S, B)$ is strongly $F$-regular.  
\end{thm}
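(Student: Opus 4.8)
The plan is to prove the two implications separately, matching the two cited results. The direction ``strongly $F$-regular $\Rightarrow$ klt'' (Proposition \ref{F-reg klt}) I would treat in full generality, valid in every dimension and characteristic: for an $F$-finite pair, a splitting of $\sO_S \to F^e_* \sO_S(\lceil (p^e-1)B\rceil)$ twisted by a test element controls the discrepancy along every divisor over $S$, forcing every log discrepancy to be positive. Concretely, I would compare the (big) test ideal $\tau(S,B)$, which equals $\sO_S$ precisely when $(S,B)$ is strongly $F$-regular, with the non-klt locus, using the trace/Grothendieck-duality interpretation of these splittings (as in Hara--Watanabe and Schwede--Tucker). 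The point to verify is that the comparison is insensitive to the residue field: it only uses the $F$-finiteness of $S$ and a dualizing complex, so no algebraically closed hypothesis is needed.

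The substance is the converse ``klt $\Rightarrow$ strongly $F$-regular'' (Theorem \ref{klt F-reg}), which is where $p>5$ enters. Since both conditions may be checked on the completion $\widehat{\sO_{S,s}}$, I would first reduce to a complete local surface pair. I would then extend the residue field $k$ along a faithfully flat, geometrically regular homomorphism $\widehat{\sO_{S,s}} \to R'$ so that the closed fiber becomes geometrically normal and the pair becomes geometrically klt; the role of $p>5$ here is to guarantee tameness, so that this base change preserves normality and the klt condition together with the standard coefficients of $B$. Over the enlarged (algebraically closed) residue field I would invoke the classification of two-dimensional klt pairs with standard coefficients: each such germ is a tame quotient $[\mathbb{A}^2/G]$ with $p \nmid |G|$, the only primes dividing $|G|$ in the dihedral, tetrahedral, octahedral and icosahedral cases being $2,3,5$. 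For such a tame quotient of a regular (hence strongly $F$-regular) ring, the Reynolds operator $\tfrac{1}{|G|}\sum_{g\in G} g$ splits the inclusion $R'^G \to R'$, so the invariant pair over the large field is strongly $F$-regular.

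Finally I would transport strong $F$-regularity back down to $(s \in S, B)$: the map $\widehat{\sO_{S,s}} \to R'$ is faithfully flat, hence pure, and strong $F$-regularity passes to pure (in particular, to faithfully flat) subrings of $F$-finite rings. Combined with the fact that strong $F$-regularity is detected on the completion, this yields the claim for the original pair.

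The hard part will be the residue-field reduction: when $k$ is imperfect one cannot simply tensor with $\bar k$ without risking non-reducedness or loss of normality, so $R'$ must be chosen to remain $F$-finite, Noetherian, and normal, and to carry a boundary with the same standard coefficients, all while keeping the pair klt. Verifying that $p>5$ makes every relevant quotient tame---and hence that both klt-ness and the splitting survive the extension and the subsequent descent---is the crux of the generalization of Hara's theorem.
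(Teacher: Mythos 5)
Your sketch of the direction ``strongly $F$-regular $\Rightarrow$ klt'' is fine and is exactly what the paper does: it cites Hara--Watanabe (Proposition \ref{F-reg klt}), whose argument never needs an algebraically closed residue field. The converse is where your plan has a genuine gap, and it is precisely the step you yourself flag as ``the hard part.'' An $F$-finite residue field $k$ of characteristic $p$ is typically imperfect, and any extension reaching $\bar{k}$ (or even the perfect closure $k^{1/p^\infty}$) contains purely inseparable subextensions of degree divisible by $p$; the hypothesis $p>5$ provides no tameness whatsoever for those, so there is no mechanism forcing the base change of $\widehat{\sO_{S,s}}$ to remain normal, let alone klt --- geometric normality genuinely fails for excellent surfaces over imperfect fields, and this is the whole difficulty of the non-closed case. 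The paper's Example following Theorem \ref{klt F-reg} (the singularity $x^2=y^3-az^3+y^4+z^4$ with $a$ not a cube) is included exactly to show that klt surface singularities over non-closed fields escape the algebraically-closed classification, e.g.\ their dual graphs. The paper's proof never base changes the surface at all: it uses Tanaka's MMP for excellent surfaces to construct a plt blowup $f:Y\to X$ whose exceptional locus is a single regular genus-zero curve $C$ (Lemma \ref{Kollar comp}), controls the coefficients of the different $\mathrm{Diff}_C(B_Y)$ (Lemma \ref{adj coeff}), and proves global $F$-regularity of the one-dimensional log Fano pair $(C,\mathrm{Diff}_C(B_Y))$ by base changing only the \emph{curve}, along extensions of bounded degree: a degree-$2$ extension to split the conic and extensions of degree less than $2/a(I)\le 4$ to rationalize the boundary points (Propositions \ref{CGS P1} and \ref{CGS genus0}). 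Those bounded-degree extensions are exactly what $p>5$ makes tame; strong $F$-regularity of $(x\in X,B)$ is then lifted from global $F$-regularity of $C$ via \cite[Proposition 2.11]{CGS} and \cite[Lemma 2.12]{HX}.

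The second gap is the classification you invoke after the (unjustified) reduction: that every two-dimensional klt pair with standard coefficients over an algebraically closed field of characteristic $p>5$ is a tame quotient of a smooth pair by $G$ with $p\nmid |G|$. No such statement is available as an input here. In positive characteristic this kind of quotient structure is known essentially as a \emph{consequence} of strong $F$-regularity (via finiteness and tameness of \'etale-in-codimension-one covers of strongly $F$-regular singularities, in the style of Carvajal-Rojas--Schwede--Tucker), so invoking it to prove klt $\Rightarrow$ strongly $F$-regular is circular; Hara's theorem --- which \emph{is} the algebraically closed case of Theorem \ref{klt F-reg} --- is proved by dual-graph analysis, and its reproof in \cite{CGS} by the plt-blowup method above, not by exhibiting klt germs as tame quotients. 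Your final descent step (purity plus splitting into complete rings, direct summand descent of $F$-regularity) is fixable in itself, but it sits downstream of these two unfixed gaps, so the proposal as it stands does not yield the theorem.
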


 In the canonical case, we see from the above observation that $X$ is MJ-canonical except at finitely many closed points. MJ-canonical singularities are defined in terms of Mather-Jacobian discrepancies instead of the usual discrepancies and can be viewed as a jet scheme theoretic counterpart of canonical singularities. 
Similarly, in the klt case, we see that if the characteristic of $k$ is larger than 5, then $X$ is strongly $F$-regular except at finitely many closed points. Strongly $F$-regular singularities are defined in terms of the Frobenius morphism and can be viewed as an $F$-singularity theoretic counterpart of klt singularities. 
 Applying the Bertini theorem for MJ-canonical singularities due to Ishii-Reguera \cite{IR} to the canonical case and the Bertini theorem for strongly $F$-regular singularities due to Schwede-Zhang \cite{SZ} to the klt case, we obtain the assertion. 
To be precise, we can prove a more general result involving a boundary divisor, which is stated as follows. 

\begin{mainthm}[Proposition \ref{terminal}, Theorems \ref{canonical} and \ref{log terminal}]
Let $X$ be a three-dimensional normal quasi-projective variety over an algebraically closed field $k$ of characteristic $p>0$ and $\Delta$ be an effective $\Q$-divisor on $X$ such that $K_X+\Delta$ is $\Q$-Cartier. Let $H$ be a general hyperplane section of $X$. 
\begin{enumerate}
\item[$(1)$] If $(X, \Delta)$ is terminal, then $(H, \Delta|_H)$ is terminal. 
\item[$(2)$] If $(X, \Delta)$ is canonical, then $(H, \Delta|_H)$ is canonical. 
\item[$(3)$] Suppose in addition that $p>5$ and the coefficients of $\Delta$ belong to the standard set $\{1-1/m \, | \, m \in \Z_{\ge 1}\}$. 
If $(X, \Delta)$ is klt, then $(H, \Delta|_H)$ is klt. 
\end{enumerate}
\end{mainthm}

\begin{small}
\begin{acknowledgement}
The authors are indebted to Shihoko Ishii for helpful discussions, especially on the proof of Theorem \ref{canonical}. 
They are also grateful to Mircea Musta\c{t}\u{a} and Hiromu Tanaka for useful conversations. 
The first author was partially supported by the Program for Leading Graduate Schools, MEXT, Japan. The second author was partially supported by JSPS KAKENHI 26400039. 
\end{acknowledgement}
\end{small}

\begin{convention}
Throughout this paper, all rings are assumed to be commutative and with a unit element and all schemes are assumed to be noetherian and separated. 
A variety over a field $k$ means an integral scheme of finite type over $k$. 
\end{convention}

%%%%%%%%%%%%%%%%%%%%%%%%%%%%%%%%%%%%%%%%%%%%%%%%%%%%%%%%%%%%%%%%%%%%%%%%%%%%%%
\section{Preliminaries}
%%%%%%%%%%%%%%%%%%%%%%%%%%%%%%%%%%%%%%%%%%%%%%%%%%%%%%%%%%%%%%%%%%%%%%%%%%%%%%

\subsection{$F$-singularities}
In this subsection, we recall the basic notions of $F$-singularities, which we will need in $\S4$. 

Let $X$ be a scheme of prime characteristic $p>0$. 
We say that $X$ is \emph{$F$-finite} if the Frobenius morphism $F:X \to X$ is a finite morphism. 
When $X=\Spec R$ is an affine scheme, $R$ is said to be $F$-finite if $X$ is $F$-finite. 
For example, a field $k$ of positive characteristic $p>0$ is $F$-finite if and only if $[k:k^p]<\infty$. 
It is known by \cite{Ku} that every $F$-finite scheme is locally excellent. 

Suppose in addition that $X$ is a normal integral scheme. 
For an integer $e \ge 1$ and a Weil divisor $D$ on $X$, we consider the following composite map: 
\[
\sO_X \xrightarrow{(F^e)^{\#}} F^e_*\sO_X \xrightarrow{F^e_*\iota} F^e_*\sO_X(D), 
\]
where $F^e:X \to X$ is the $e$-th iteration of Frobenius and $\iota: \sO_X \to \sO_X(D)$ is the natural inclusion. 

Strong $F$-regularity is one of the most basic classes of $F$-singularities. 
\begin{defn}
Let $(R,\m)$ be an $F$-finite normal local ring of positive characteristic $p>0$ and $\Delta$ be an effective $\Q$-divisor on $X=\Spec R$. 
Then the pair $(R, \Delta)$ is said to be \emph{strongly $F$-regular} if for every nonzero element $c \in R$, there exists an integer $e \ge 1$ such that the map 
\[
\sO_X \to F^e_*\sO_X(\lceil (p^e-1) \Delta \rceil +\mathrm{div}_X(c))
\]
splits as an $\sO_X$-module homomorphism. 
When $y$ is a point in an $F$-finite normal integral scheme $Y$ and $B$ is an effective $\Q$-divisor on $Y$, we say that $(y \in Y, B)$ is strongly $F$-regular if $(\sO_{Y, y}, B_y)$ is strongly $F$-regular. 
We say that the normal singularity $(y \in Y)$ is strongly $F$-regular if $(\sO_{Y, y}, 0)$ is strongly $F$-regular. 
\end{defn}

\begin{rem}
We have the following hierarchy of properties of normal singularities: 
\[
\textup{regular}\Longrightarrow \textup{strongly $F$-regular} \Longrightarrow \textup{Cohen-Macaulay}. 
\]
\end{rem}

Global $F$-regularity is defined as a global version of strong $F$-regularity. 
\begin{defn}
Let $X$ be an $F$-finite normal integral scheme and $\Delta$ be an effective $\Q$-divisor on $X$. 
Then $(X, \Delta)$ is said to be \emph{globally $F$-regular} if for every Cartier divisor $D$ on $X$, there exists an integer $e \ge 1$ such that the map 
\[
\sO_X \to F^e_*\sO_X(\lceil (p^e-1) \Delta \rceil +D)
\]
splits as an $\sO_X$-module homomorphism. 
\end{defn}

\begin{rem}
If $X$ is an affine scheme, then $(X, \Delta)$ is globally $F$-regular if and only if $(x \in X, \Delta)$ is strongly $F$-regular for all $x \in X$. 
However, in general, the former is much stronger than the latter. 
For example, a smooth projective variety of general type over an algebraically closed field of positive characteristic is not globally $F$-regular but strongly $F$-regular at all points. 
\end{rem}

\subsection{Singularities in the minimal model program}

In this subsection, we recall the definition of singularities in the minimal model program. Note that we do not assume that the singularities are defined over an algebraically closed field. 
We start with the definition of canonical divisors on normal schemes. 

Suppose that $\pi:Y \to X$ is a separated morphism of finite type between schemes. 
If $X$ has a dualizing complex (see \cite[Chapter V, \S 2]{Hart} for the definition of dualizing complexes), then $\omega^\bullet_Y := \pi^! \omega_X^{\bullet}$ is a dualizing complex on $Y$, where $\pi^!$ is the twisted inverse image functor associated to $\pi$ obtained in \cite[Chapter VII, Corollary 3.4 (a)]{Hart}. 
Since any $F$-finite affine scheme has a dualizing complex by \cite{Ga}, any scheme of finite type over an $F$-finite local ring has a dualizing complex.   

Let $X$ be an excellent integral scheme with a dualizing complex $\omega_X^\bullet$. 
The \emph{canonical sheaf} $\omega_X$ associated to $\omega_X^{\bullet}$ is the coherent $\sO_X$-module defined as the first non-zero cohomology module of $\omega_X^\bullet$.
It is well-known that $\omega_X$ satisfies the Serre's second condition $(S_2)$ (see for example \cite[(1.10)]{Ao}). 
If $X$ is a variety over an algebraically closed field $k$ with structure morphism $f:X \to \Spec k$, then the canonical sheaf associated to $f^!k$ coincides with the classical definition of the canonical sheaf of $X$. 
When $X$ is a normal scheme, a \emph{canonical divisor} $K_X$ on $X$ associated to $\omega_X^\bullet$ is any Weil divisor $K_X$ on $X$ such that $\sO_X(K_X) \cong \omega_X$. 

Suppose that $X$ is a normal scheme with a dualizing complex $\omega_X^{\bullet}$ and fix a canonical divisor $K_X$ on $X$ associated to $\omega_X^\bullet$. 
Given a proper birational morphism $\pi:Y \to X$ from a normal scheme $Y$, we always choose a canonical divisor $K_Y$ on $Y$ which is associated to $\pi^! \omega_X^{\bullet}$ and coincides with $K_X$ outside the exceptional locus of $f$. 

\begin{defn}
(i) We say that $(x \in X)$ is a \emph{normal singularity} if $X=\Spec R$ is an affine scheme with a dualizing complex where $R$ is an excellent normal local ring and if $x$ is the unique closed point of $X$. 

(ii) A proper birational morphism $f: Y \to X$ between schemes is said to be a \emph{resolution of singularities} of $X$, or a \emph{resolution} of $X$ for short, if $Y$ is regular. Suppose that $X$ is a normal integral scheme and $\Delta$ is a $\Q$-divisor on $X$. A resolution $f:Y \to X$ is said to be a \emph{log resolution} if the union of $f^{-1}(\Supp(\Delta))$ and the exceptional locus of $f$ is a simple normal crossing divisor. 
\end{defn}

We are now ready to state the definition of singularities in the minimal model program. 
\begin{defn}\label{mmp}
(i) Suppose that $(x \in X)$ is a normal singularity with a dualizing complex $\omega_X^{\bullet}$ and $\Delta$ is an effective $\Q$-divisor on $X$ such that $K_X+\Delta$ is $\Q$-Cartier. 
Given a proper birational morphism $\pi:Y \to X$ from a normal scheme $Y$, we choose a canonical divisor $K_Y$ on $Y$ associated to $\pi^{!}\omega_X^{\bullet}$ such that  
\[
K_Y+\pi^{-1}_*\Delta=\pi^*(K_X+\Delta)+\sum_i a_i E_i, 
\]
where $\pi^{-1}_*\Delta$ is the strict transform of $\Delta$ by $\pi$, the $a_i$ are rational numbers and the $E_i$ are $\pi$-exceptional prime divisors on $Y$. 
We say that $(x \in X, \Delta)$ is \emph{terminal} (resp.~\emph{canonical}, \emph{klt}, \emph{plt}) if all the $a_i >0$ (resp.~all the $a_i \ge 0$, all the $a_i >-1$ and $\lfloor \Delta \rfloor=0$, all the $a_i >-1$) for every proper birational morphism $\pi:Y \to X$ from a normal scheme $Y$. 
We say that $(x \in X)$ is \emph{terminal} (resp.~\emph{canonical}, \emph{klt}, \emph{plt}) if so is $(x \in X, 0)$. 

(ii) Let $X$ be an excellent integral normal scheme with a dualizing complex and $\Delta$ be an effective $\Q$-divisor on $X$ such that $K_X+\Delta$ is $\Q$-Cartier. 
Then $(X, \Delta)$ is \emph{terminal} (resp.~\emph{canonical}, \emph{klt}, \emph{plt}) if so is the normal singularity $(x \in X=\Spec \sO_{X, x}, \Delta_x)$ for every $x \in X$. 
\end{defn}

\begin{rem}
(i) If $X$ is defined over a field of characteristic zero or $\dim X \le 3$, then it is enough to check the condition in Definition \ref{mmp} only for one $f$, namely, for a log resolution of $(X, \Delta)$. 

(ii) We have the following hierarchy of properties of $\Q$-Gorenstein normal singularities: 
\[
\textup{regular} \Longrightarrow \textup{terminal} \Longrightarrow \textup{canonical} \Longrightarrow \textup{klt}.  
\]
\end{rem}

Strong $F$-regularity implies being klt. 
\begin{prop}[\textup{\cite[Theorem 3.3]{HW}}]\label{F-reg klt}
Let $(x \in X)$ be an $F$-finite normal singularity and $\Delta$ be an effective $\Q$-divisor on $X$ such that $K_X+\Delta$ is $\Q$-Cartier. 
If $(x \in X, \Delta)$ is strongly $F$-regular, then it is klt. 
\end{prop}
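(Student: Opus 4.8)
The plan is to check the two defining conditions of klt directly from the Frobenius splitting: that $\lfloor \Delta \rfloor = 0$ and that $a(E; X, \Delta) > -1$ for every prime divisor $E$ over $X$. The mechanism is to reinterpret a splitting, via Grothendieck duality for the finite morphism $F^e$, as the non-vanishing of a section of a reflexive rank-one sheaf, and then to measure the order of that section along $E$, exploiting that $F^e$ scales divisorial valuations by $p^e$.

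First I would fix a proper birational morphism $\pi \colon Y \to X$ from a normal scheme $Y$ and a prime divisor $E = E_{i_0}$ on $Y$, recording
\[
K_Y + \pi^{-1}_*\Delta = \pi^*(K_X+\Delta) + \sum_i a_i E_i .
\]
Since $K_X + \Delta$ is $\Q$-Cartier I may fix $m \ge 1$ with $m(K_X+\Delta)$ Cartier. Given the splitting map
\[
\sO_X \to F^e_*\sO_X(\lceil (p^e-1)\Delta \rceil + \Div_X(c)),
\]
applying $\mathcal{H}om_{\sO_X}(-,\sO_X)$ and the duality isomorphism $(F^e)^!\sO_X \cong \sO_X((1-p^e)K_X)$---valid because the $F$-finite normal scheme $X$ carries a dualizing complex---identifies a splitting with a surjective $\sO_X$-linear map
\[
F^e_*\sO_X((1-p^e)K_X - \lceil (p^e-1)\Delta\rceil - \Div_X(c)) \to \sO_X ,
\]
equivalently with a section $\phi_e$ of that reflexive sheaf whose value at the closed point $x$ is a unit.

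Next I would transport $\phi_e$ along $\pi$ and localize at the generic point of $E$. Because $\phi_e$ is a unit at $x$, its pullback is a nonzero rational section, so $\ord_E$ of it is finite; expanding this order through the displayed discrepancy relation, and using that the $e$-th Frobenius multiplies $\ord_E$ by $p^e$, converts the constraint into an inequality of the shape
\[
(p^e - 1)(1 + a_{i_0}) \ge -C_e ,
\]
where the log discrepancy $1 + a_{i_0}$ appears and $C_e$ collects the bounded contributions of $\Div_X(c)$ together with the rounding error $\lceil (p^e-1)\Delta\rceil - (p^e-1)\Delta$. Dividing by $p^e$ and letting $e \to \infty$ along the exponents for which strong $F$-regularity supplies a splitting gives $1 + a_{i_0} \ge 0$; arranging $\Div_X(c)$ to leave a fixed positive slack along $E$ upgrades this to $a_{i_0} > -1$. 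A parallel but simpler computation at a codimension-one point of $X$---where requiring a splitting for every $c$, in particular for $c$ vanishing along a given component of $\Delta$, is incompatible with that component having coefficient $\ge 1$---yields $\lfloor \Delta \rfloor = 0$.

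The step I expect to be the main obstacle is this transport to $Y$, carried out over a merely normal scheme with no resolution and no algebraically closed base field at hand: everything must be phrased through reflexive sheaves and the divisorial valuation $\ord_E$, and one must control the rounding term uniformly so that $C_e = o(p^e)$ while simultaneously using the freedom in $c$ to force the strict inequality. This is precisely where strong $F$-regularity---splitting for \emph{every} nonzero $c$---is needed rather than mere $F$-purity.
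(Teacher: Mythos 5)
Your proposal is correct and takes essentially the same route as the paper, which offers no independent argument for Proposition \ref{F-reg klt} but quotes it from \cite[Theorem 3.3]{HW}: the Hara--Watanabe proof is exactly your mechanism of dualizing each splitting via $(F^e)^!\sO_X \cong \sO_X((1-p^e)K_X)$, measuring $\ord_E$ of the resulting section on an arbitrary proper birational model, and letting the rounding term be $o(p^e)$ while the freedom in choosing $c$ (with large order along $E$, or along a component of $\Delta$) forces the strict inequality $a_E > -1$ and $\lfloor \Delta \rfloor = 0$. Note also that your argument, like the cited one, correctly avoids any appeal to resolutions, which is what makes it valid for the $F$-finite normal singularities (not necessarily of finite type over a field) considered here.
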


%%%%%%%%%%%%%%%%%%%%%%%%%%%%%%%%%%%%%%%%%%%%%%%%%%%%%%%%%%%%%%%%%%%%%%%%%%%%%%
\section{Brief review on surface singularities}
%%%%%%%%%%%%%%%%%%%%%%%%%%%%%%%%%%%%%%%%%%%%%%%%%%%%%%%%%%%%%%%%%%%%%%%%%%%%%%
In this section, we briefly review the theory of surface singularities. All results are well-known if the singularity is defined over an algebraically closed field. 
Some of the results may be known to experts (even if it is not defined over an algebraically closed field), but we include their proofs here, because we have not been able to find any reference to them. 

\begin{defn}
We say that a scheme $X$ is a \emph{surface} if $X$ is a two-dimensional excellent separated integral scheme with a dualizing complex $\omega_X^\bullet$ and that a normal singularity $(x \in X)$ is a \emph{surface singularity} if $\dim \sO_{X,x}=2$. 
\end{defn}

\begin{rem}\label{remark base scheme}
From now on, we often use the results in \cite{Ko} and \cite{Ta}. 
We remark that the base scheme is assumed to be regular in these references, but this assumption is unnecessary in our framework, because our base scheme is a normal surface singularity and its dualizing complex is unique up to isomorphism. 
\end{rem}

First we recall the definition and basic properties of intersection numbers.
Let $(x \in X)$ be a normal surface singularity and $f :Y \to X$ be a proper birational morphism from a normal surface $Y$ with exceptional curves $E=\bigcup_i E_i$. 
For a Cartier divisor $D$ on $Y$ and a Weil divisor $Z= \sum_{i} a_i E_i$ on $Y$, We define the intersection number $D \cdot Z$ as follows: 
\[D \cdot Z= \sum_{i} a_i \deg_{E_i/k(x)}(\sO_Y(D) |_{E_i}) ,\]
where $\deg_{E_i/k(x)} : \Pic(E_i) \to \Z $ is the degree morphism defined as in \cite[Definition 1.4]{Fu}.

\begin{prop}[\textup{\cite[Proposition 2.5]{Fu}, \cite[Theorem 10.1]{Ko}}]\label{intersection}
With the notation above, the following hold. 
\begin{enumerate}
\item[$(1)$] The intersection pairing 
\[\mathrm{Div}(X) \times \bigoplus_{i} \Z E_i \to \Z, \quad (D, Z) \mapsto D \cdot Z\] is a bilinear map.
\item[$(2)$] If $Z$ is Cartier and $\Supp \, D \subseteq E$, then $D \cdot Z = Z \cdot D$.
\item[$(3)$] If $Y$ is regular, then the intersection matrix $(E_i \cdot E_j)_{i,j}$ is a negative-definite symmetric matrix.
\end{enumerate}
\end{prop}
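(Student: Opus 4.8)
The plan is to treat the three assertions in order, using (1) to prove (2) and both of them to prove (3). The guiding principle throughout is to work \emph{relatively} over the residue field $k(x)$: although $Y$ is only proper over the local scheme $X$, each exceptional curve $E_i$ is projective over $\Spec k(x)$, so it carries a degree homomorphism $\deg_{E_i/k(x)}\colon\Pic(E_i)\to\Z$ that is well behaved even when $k(x)$ is neither algebraically closed nor perfect. Assertion (1) is then nearly formal: linearity in the second variable is immediate from the defining formula, which is $\Z$-linear in the coefficients $a_i$, while linearity in the first variable follows from the isomorphism $\sO_Y(D+D')|_{E_i}\cong\sO_Y(D)|_{E_i}\otimes\sO_Y(D')|_{E_i}$ for Cartier divisors $D,D'$ together with the fact that $\deg_{E_i/k(x)}$ is a group homomorphism.

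For (2) I would first use bilinearity to reduce to effective Cartier divisors supported on $E$. When $D$ and $Z$ have no common component, the quotient $\sO_Y/\bigl(\sO_Y(-D)+\sO_Y(-Z)\bigr)$ has finite length supported on $\Supp D\cap\Supp Z\subseteq E$, and the short exact sequence
\[
0\to\sO_Z(-D)\to\sO_Z\to\sO_Y/\bigl(\sO_Y(-D)+\sO_Y(-Z)\bigr)\to 0
\]
identifies $D\cdot Z$ with $\dim_{k(x)}H^0\bigl(\sO_Y/(\sO_Y(-D)+\sO_Y(-Z))\bigr)$, a quantity symmetric in $D$ and $Z$. The general case, including common components and self-intersections, then follows from the Euler-characteristic (Snapper) expression for $D\cdot Z$, which is symmetric by construction; this is the content of \cite[Proposition 2.5]{Fu} and \cite[Theorem 10.1]{Ko}, adapted to the present relative setting.

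For (3), since $Y$ is regular every $E_i$ is Cartier, so by (2) the matrix $(E_i\cdot E_j)$ is symmetric, and $E_i\cdot E_j\ge 0$ for $i\neq j$ because $E_i$ is effective and does not contain $E_j$. The decisive input is the projection formula $f^{*}C\cdot E_j=0$ for every Cartier divisor $C$ on $X$, which holds because $\sO_Y(f^{*}C)|_{E_j}$ is pulled back from the point $x$ and hence has degree $0$. Taking $C=\mathrm{div}_X(t)$ for a nonzero nonunit $t\in\m_x$ and writing $f^{*}C=\widetilde{T}+\sum_i c_iE_i$ with $\widetilde{T}$ the effective strict transform and $c_i=\ord_{E_i}(t)>0$ for every $i$, the projection formula gives
\[
\Bigl(\sum_i c_iE_i\Bigr)\cdot E_j=-\,\widetilde{T}\cdot E_j\le 0\qquad\text{for all }j,
\]
with strict inequality for at least one $j$, since $\widetilde{T}$ meets the fiber $E$, which is connected because $X$ is normal. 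I would conclude by the classical Zariski linear-algebra lemma: a symmetric matrix $M=(m_{ij})$ with $m_{ij}\ge 0$ for $i\neq j$, whose off-diagonal support graph is connected and which admits a strictly positive vector $c$ with $(Mc)_j\le 0$ for all $j$ and $(Mc)_{j_0}<0$ for some $j_0$, is negative definite; this follows at once from the identity $a^{T}Ma=-\tfrac12\sum_{i\neq j}m_{ij}c_ic_j(b_i-b_j)^2+\sum_j(Mc)_jc_jb_j^2$ obtained after the substitution $a_i=c_ib_i$.

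The main obstacle is not any single computation but the fact that $(x\in X)$ need not be defined over an algebraically closed field, so every degree, Euler characteristic, and Riemann--Roch statement must be carried out over the possibly imperfect residue field $k(x)$ relative to which the $E_i$ are proper. This is precisely the formalism supplied by the relative degree $\deg_{E_i/k(x)}$ of \cite{Fu} and the intersection theory of \cite{Ko}, and the real work is to check that these references---stated for a regular base scheme---apply verbatim here, which is legitimate by Remark \ref{remark base scheme} since our base is a normal surface singularity with a unique dualizing complex. A secondary relative point is to secure the connectedness of $E$ and the positivity $c_i>0$ for \emph{all} $i$ used in (3); both rest on the normality and excellence of $\sO_{X,x}$---via Stein factorization and a relative form of Zariski's main theorem---rather than on any properness of $Y$ over a field.
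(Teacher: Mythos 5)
The paper itself gives no proof of this proposition: it is imported wholesale from \cite[Proposition 2.5]{Fu} and \cite[Theorem 10.1]{Ko}, with Remark \ref{remark base scheme} explaining why those references (stated over a regular base) apply to the present base $\Spec \sO_{X,x}$. So there is no in-paper argument to compare against; what your proposal does is reconstruct the standard proofs behind those citations, and the reconstruction is essentially correct and follows the same classical route: bilinearity is formal from the definition of $\deg_{E_i/k(x)}$; symmetry comes from a symmetric Euler-characteristic expression; and negative definiteness is the Mumford--Zariski argument---pull back $\mathrm{div}_X(t)$ for a nonzero nonunit $t\in\m_x$, note that $f^*\mathrm{div}_X(t)\cdot E_j=0$ because the reduced curve $E_j$ maps scheme-theoretically through $\Spec k(x)$, and feed the resulting vector $c=(c_i)$ with $c_i=\ord_{E_i}(t)>0$ into the linear-algebra lemma (your quadratic identity is correct, and the needed connectedness of the dual graph of $E$ follows from $f_*\sO_Y=\sO_X$ and Zariski's main theorem, which is exactly where normality and excellence of $\sO_{X,x}$ enter). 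The one step I would strike is the opening move of (2): ``use bilinearity to reduce to effective Cartier divisors supported on $E$.'' On a normal but non-regular $Y$, the positive and negative parts of a Cartier divisor supported on $E$ need not themselves be Cartier, so this reduction is not available in the stated generality; it is also unnecessary, since the Snapper/Euler-characteristic formulation you invoke immediately afterwards---which is precisely the content of \cite[Theorem 10.1]{Ko}---treats arbitrary Cartier $D$ and $Z$ supported on $E$, common components and self-intersections included, without any effectivity assumption. With that step deleted, your outline is a faithful account of the literature proof on which the paper relies.
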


\begin{defn}
Let $(x \in X)$ be a normal surface singularity with a dualizing complex $\omega_X^{\bullet}$. 
A resolution $f :Y \to X$ with exceptional curves $E=\bigcup_i E_i$ is said to be \emph{minimal} if a canonical sheaf $K_Y$ on $Y$ associated to the dualizing complex $f^! \omega_X^\bullet$ is $f$-nef, that is, $K_Y \cdot E_i \ge 0$ for every $i$. 
A minimal resolution of $X$ always exists by \cite[Theorem 2.25]{Ko}. 
\end{defn}

We also recall the definition of rational singularities. 
A normal surface singularity $(x \in X)$ is said to be a \emph{rational singularity} if $R^1 f_*\sO_Y=0$ for every resolution $f : Y \to X$. 
\begin{prop}\label{klt rational}
Let $(x \in X)$ be a normal surface singularity. 
\begin{enumerate}
\item[$(1)$] $($\cite[Proposition 1.2]{Li}$)$
$(x \in X)$ is a rational singularity if and only if $R^1 f_*\sO_Y=0$ for some resolution $f : Y \to X$.
\item[$(2)$] $($\cite[Proposition 2.28]{Ko}$)$ 
If there exists an effective $\Q$-divisor $\Delta$ on $X$ such that $(x \in X, \Delta)$ is plt, then $(x \in X)$ is a rational singularity. 
\end{enumerate}
\end{prop}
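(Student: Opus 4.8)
The plan is to treat the two parts separately, in each case reducing to a computation on a single resolution.

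For part (1), the implication from ``every resolution'' to ``some resolution'' is trivial, so I would concentrate on the converse. Suppose $R^1 f_*\sO_Y=0$ for one resolution $f\colon Y\to X$, and let $g\colon Y'\to X$ be an arbitrary resolution. Since $X$ is an excellent surface, I can choose a resolution $\pi\colon W\to X$ dominating both, say $\pi=f\circ h=g\circ h'$ with $h\colon W\to Y$ and $h'\colon W\to Y'$ proper birational morphisms of regular surfaces. The key local lemma I would establish is that for any proper birational morphism $h\colon W\to Y$ of regular excellent surfaces one has
\[
h_*\sO_W=\sO_Y \qquad\text{and}\qquad R^1h_*\sO_W=0 .
\]
I would prove this by factoring $h$ into a finite sequence of blow-ups of closed points (which is available for regular excellent surfaces), reducing to a single blow-up $b\colon\widetilde Y\to Y$ of a closed point with residue field $\kappa$. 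There the exceptional fibre is $\PP^1_\kappa$, and the theorem on formal functions together with $H^1(\PP^1_\kappa,\sO_{\PP^1_\kappa}(n))=0$ for $n\ge -1$ yields both equalities. The Leray spectral sequence for $\pi=f\circ h$ then gives $R^1\pi_*\sO_W\cong R^1f_*\sO_Y$, and likewise $R^1\pi_*\sO_W\cong R^1g_*\sO_{Y'}$; hence $R^1g_*\sO_{Y'}=0$, which is the assertion.

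For part (2), by part (1) it suffices to exhibit a single resolution with vanishing first direct image. I would take a log resolution $f\colon Y\to X$ of $(x\in X,\Delta)$ and record the discrepancy identity
\[
K_Y=f^*(K_X+\Delta)-f^{-1}_*\Delta+\sum_i a_iE_i ,\qquad a_i>-1,
\]
the bound on the $a_i$ being exactly the plt hypothesis. The plan is then to invoke the relative Kawamata--Viehweg-type vanishing available for birational morphisms onto a surface: because $a_i>-1$ forces the round-ups $\lceil a_i\rceil$ to be $\ge 0$ and hence the relevant exceptional divisor to be effective, and because $f^*(K_X+\Delta)$ is numerically $f$-trivial, the fractional discrepancy divisor can be arranged as an $f$-nef $\Q$-divisor plus a boundary of coefficients in $[0,1)$, and the vanishing theorem delivers $R^1f_*\sO_Y=0$. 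The components of $\Delta$ of coefficient one contribute only strict transforms, which are non-exceptional and do not disturb the exceptional computation; the remaining manipulation of round-ups is routine bookkeeping.

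The step I expect to be the genuine obstacle is the vanishing invoked in part (2): Kawamata--Viehweg vanishing is \emph{false} in general in positive characteristic. What rescues the argument is that for a proper birational morphism onto a \emph{surface} the required vanishing is characteristic-free, since it ultimately rests on the negative-definiteness of the intersection matrix (Proposition \ref{intersection}(3)) together with Grothendieck duality, rather than on any lifting-to-characteristic-zero input. This is precisely the content we may borrow from \cite[Proposition~2.28]{Ko} for part (2) and \cite[Proposition~1.2]{Li} for part (1). Finally, although these references are phrased for a regular base, or for singularities over a field, Remark \ref{remark base scheme} guarantees that the arguments carry over verbatim to our normal surface singularity $(x\in X)$, because its dualizing complex is unique up to isomorphism; this is the point that makes the whole reduction legitimate in our not-necessarily-algebraically-closed setting.
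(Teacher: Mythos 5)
First, a framing remark: the paper does not actually prove this proposition --- the citations to \cite[Proposition 1.2]{Li} and \cite[Proposition 2.28]{Ko} \emph{are} the proof, with Remark \ref{remark base scheme} justifying their use over a non-regular base. So your closing move (defer to those references, legitimized by Remark \ref{remark base scheme}) coincides with what the paper does. Your reconstruction of part (1) is also sound and is essentially the standard (Lipman) argument: dominate two resolutions by a third, factor proper birational morphisms of regular excellent surfaces into point blow-ups, kill $h_*$ and $R^1h_*$ for a single blow-up via formal functions and $H^1(\PP^1_\kappa,\sO(n))=0$ for $n\ge -1$, then transfer through Leray.

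The genuine gap is in part (2), at exactly the step you dismiss as ``routine bookkeeping.'' On an arbitrary log resolution $f\colon Y\to X$, write $K_Y\equiv_f -f^{-1}_*\Delta+\sum_i a_iE_i$ (relative numerical equivalence). The decomposition obtained by rounding, namely
\[
0\equiv_f K_Y+\Bigl(f^{-1}_*\Delta+\sum_i(\lceil a_i\rceil-a_i)E_i\Bigr)-A,
\qquad A:=\Bigl\lceil \sum_i a_iE_i\Bigr\rceil\ge 0 ,
\]
does \emph{not} have the shape required by relative vanishing, because the integral part $-A$ is anti-effective exceptional and in general not $f$-nef. Concrete failure: let $x\in X$ be a regular point (so $(X,0)$ is klt), blow up $x$, then a point of $E_1$, then the point $\tilde E_1\cap E_2$; the discrepancies are $1,2,4$, the self-intersections are $-3,-2,-1$, and $A\cdot \tilde E_1=(-3)+0+4=1>0$, so $-A$ is not $f$-nef. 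If instead you apply the vanishing theorem to $N=A$ (which is what the effectivity of the round-up actually buys you), you get $R^1f_*\sO_Y(A)=0$, and this does \emph{not} imply $R^1f_*\sO_Y=0$: the quotient $\sO_Y(A)/\sO_Y$ is supported on $\Exc(f)$, and the long exact sequence only exhibits $R^1f_*\sO_Y$ as a quotient of $f_*\bigl(\sO_Y(A)/\sO_Y\bigr)$. The missing idea is to run the argument on the \emph{minimal} resolution rather than a log resolution: there $K_Y$ is $f$-nef, so $\sum_i a_iE_i\equiv_f K_Y+f^{-1}_*\Delta$ is $f$-nef and exceptional, and negative definiteness (Proposition \ref{intersection} (3), via the negativity lemma \cite[Claim 10.3.5]{Ko}) forces every $a_i\le 0$; combined with the plt hypothesis this gives $a_i\in(-1,0]$, so $B:=f^{-1}_*\Delta-\sum_i a_iE_i$ is a genuine boundary, $0\equiv_f K_Y+B$, and relative vanishing \cite[Theorem 10.4]{Ko} applies with trivial nef part to give $R^1f_*\sO_Y=0$. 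This is precisely where part (1) earns its keep --- rationality need only be checked on this single resolution --- and it is also why the lack of snc on the minimal resolution is harmless, since the surface vanishing theorem is numerical. Two smaller caveats you should record: plt permits coefficient-one components of $\Delta$, so $B$ can have non-exceptional coefficient-one components and one needs the version of relative vanishing that tolerates this; and since only $K_X+\Delta$ (not $\Delta$ itself) is assumed $\Q$-Cartier, all pullbacks must be taken in the numerical sense of \cite{Ko}.
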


In the rest of this section, we work in the following setting. 

\begin{setting}\label{surface set}
Suppose that $(x \in X)$ is a normal surface singularity with a dualizing complex $\omega_X^\bullet$ and $f: Y \to X$ is a (nontrivial) resolution with exceptional curves $E=\bigcup_i E_i$. 
Let $K_Y$ denote a canonical divisor on $Y$ associated to $f^! \omega_X^\bullet$. 
Let $Z_f$ denote the fundamental cycle of $f$, that is, a unique minimal nonzero effective divisor $Z$ on $Y$ such that $\Supp \, Z \subseteq E$ and $-Z$ is $f$-nef $($such a divisor always exists by \cite[Definition and Claim 10.3.6]{Ko}$)$. 
\end{setting}

Let $D$ be a divisor on $Y$ and $Z$ be a nonzero effective divisor on $Y$ such that $\Supp \,Z \subseteq E$.
The Euler characteristic $\chi(Z, D)$ is defined by 
\[\chi(Z, \sO_Y(D)|_Z) = \dim_{k(x)} H^0(Z, \sO_Y(D)|_Z) - \dim_{k(x)} H^1(Z, \sO_Y(D)|_Z).\]

The following is the Riemann-Roch theorem for curves embedded in a regular surface.

\begin{prop}\label{RR}
For each divisor $D$ on $Y$ and each nonzero effective divisor $Z=\sum_i a_i E_i$ on $Y$ such that $\Supp \,Z \subseteq E$, we have
\[
\chi(Z, D)=\frac{(2D-K_Y-Z) \cdot Z}{2}=\chi(Z, 0) + D \cdot Z.
\]
\end{prop}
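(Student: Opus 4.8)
The plan is to prove both equalities by induction on the number of prime divisors occurring in $Z$ counted with multiplicity, that is, on $\sum_i a_i$, exploiting additivity of the Euler characteristic along short exact sequences. It suffices to establish the two identities
\[
\chi(Z,D)=\chi(Z,0)+D\cdot Z \qquad\text{and}\qquad \chi(Z,0)=-\frac{(K_Y+Z)\cdot Z}{2},
\]
since the displayed formula follows formally from these by writing $\frac{(2D-K_Y-Z)\cdot Z}{2}=D\cdot Z-\frac{(K_Y+Z)\cdot Z}{2}$. The engine of the induction is the sequence attached to a splitting $Z=Z'+E_j$ with $a_j\ge 1$, so that $Z'$ is effective with $\Supp Z'\subseteq E$. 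Because $Y$ is regular, every Weil divisor is Cartier, so $\sO_Y(-Z')$ is a line bundle; dividing the inclusion $\sO_Y(-Z)\subseteq\sO_Y(-Z')$ and tensoring by $\sO_Y(D)$ yields the short exact sequence
\[
0 \to \sO_Y(D-Z')|_{E_j} \to \sO_Y(D)|_Z \to \sO_Y(D)|_{Z'} \to 0,
\]
whence $\chi(Z,D)=\chi(E_j,D-Z')+\chi(Z',D)$. Note that each $E_j$ and each such $Z$ is proper over the residue field $k(x)$, since $f$ is proper and the $E_i$ lie over $x$; this is the field over which all the degrees and Euler characteristics are computed.

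For the first identity I would proceed as follows. In the base case $Z=E_j$, which is an integral curve proper over $k(x)$, the relation $\chi(E_j,\mathcal{L})-\chi(E_j,\sO_{E_j})=\deg_{E_j/k(x)}\mathcal{L}$ is the Riemann--Roch property of the degree morphism of \cite[Definition 1.4]{Fu}; applying it to $\mathcal{L}=\sO_Y(D)|_{E_j}$ gives exactly $D\cdot E_j$. For the inductive step, applying the same relation to the two line bundles $\sO_Y(D-Z')|_{E_j}$ and $\sO_Y(-Z')|_{E_j}$, whose ratio is $\sO_Y(D)|_{E_j}$, produces $\chi(E_j,D-Z')-\chi(E_j,-Z')=D\cdot E_j$; combining with the inductive hypothesis $\chi(Z',D)-\chi(Z',0)=D\cdot Z'$ gives $\chi(Z,D)-\chi(Z,0)=D\cdot E_j+D\cdot Z'=D\cdot Z$.

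For the second identity the crucial input in the base case is adjunction on the regular (hence Gorenstein) surface $Y$: since $E_j$ is an effective Cartier divisor, its dualizing sheaf is $\omega_{E_j}\cong\sO_Y(K_Y+E_j)|_{E_j}$, so $\deg_{E_j/k(x)}\omega_{E_j}=(K_Y+E_j)\cdot E_j$. Serre duality for the Gorenstein curve $E_j$ over $k(x)$ gives $\chi(E_j,\sO_{E_j})=-\chi(E_j,\omega_{E_j})$, and together with Riemann--Roch this forces $\chi(E_j,0)=-\frac{1}{2}\deg_{E_j/k(x)}\omega_{E_j}=-\frac{1}{2}(K_Y+E_j)\cdot E_j$. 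For the inductive step, the sequence above with $D=0$ gives $\chi(Z,0)=\chi(E_j,-Z')+\chi(Z',0)$; expanding $\chi(E_j,-Z')=\chi(E_j,0)-Z'\cdot E_j$ and inserting the base case along with the hypothesis $\chi(Z',0)=-\frac{1}{2}(K_Y+Z')\cdot Z'$, the target value $-\frac{1}{2}(K_Y+Z)\cdot Z$ drops out after expanding $(K_Y+Z)\cdot Z$ --- provided one knows $E_j\cdot Z'=Z'\cdot E_j$. This is exactly where I invoke the symmetry of the intersection pairing on the regular surface $Y$ from Proposition \ref{intersection}(3).

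The main obstacle, and the reason this requires proof rather than a bare citation, is that $k(x)$ is not assumed algebraically closed and the curves $E_j$ need not be smooth or geometrically reduced: I must ensure that Riemann--Roch, Serre duality, and adjunction all hold for proper Gorenstein curves over an arbitrary field, with $\deg_{E_j/k(x)}$ taken in the sense of \cite{Fu}. The regularity of $Y$ is precisely what guarantees the two facts the inductions consume, namely that each $E_j$ is a Gorenstein effective Cartier divisor (so adjunction and duality apply) and that the intersection pairing is symmetric.
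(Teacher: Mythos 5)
Your proof is correct and follows essentially the same route as the paper: the same induction on $\sum_i a_i$ via the decomposition sequence $0 \to \sO_{E_j}(-Z') \to \sO_Z \to \sO_{Z'} \to 0$, additivity of $\chi$, and symmetry of the intersection pairing (Proposition \ref{intersection}) to absorb the cross term $E_j \cdot Z' = Z' \cdot E_j$. The only difference is the base case $Z=E_j$: where the paper simply cites \cite[Theorem 2.5 (1) and Theorem 2.7]{Ta}, you derive the single-curve Riemann--Roch from the degree theory of \cite{Fu}, adjunction, and Serre duality for the Gorenstein curve $E_j$ over $k(x)$, which is precisely the content of the cited results.
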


\begin{proof}
We will prove the assertion by induction on $a:= \sum_{i} a_i$.
If $a=1$, then $Z=E_i$ for some $i$ and the assertion immediately follows from \cite[Theorem 2.5 (1) and Theorem 2.7]{Ta} (see also Remark \ref{remark base scheme}). 
Suppose that $a \ge 2$. 
Then there exists $i$ such that $a_i \ge 1$, so set $Z' :=Z - E_i$ and consider the exact sequence
\[
0 \to \sO_{E_i}(-Z') \to \sO_Z \to \sO_{Z'} \to 0.
\]
By the additivity of the Euler characteristic and applying the induction hypothesis to $Z'$ and $E_i$, one has 
\begin{align*}
\chi (Z, D) &= \chi (Z', D)+ \chi (E_i, D-Z')\\
&= \frac{(2D-K_Y-Z') \cdot Z'}{2} + \frac{(2(D-Z')-K_Y-E_i) \cdot E_i}{2}\\
&=\frac{(2D-K_Y-Z) \cdot Z}{2}.
\end{align*}
\end{proof}

\begin{prop}[\cite{Ar}]\label{rat fund}
Suppose that $(x \in X)$ is a rational singularity. 
Let $\m_x$ denotes the maximal ideal on $X$ corresponding to $x$ and $e(x \in X)$ be the Hilbert-Samuel multiplicity of $\sO_{X, x}$. 
Then the following hold for each integer $n \ge 0$. 
\begin{enumerate}
\item[$(1)$] $\m_x^n = H^0(Y, \sO_Y(-n Z_f))$.
\item[$(2)$] $\dim_{k(x)} \m^n/\m^{n+1}= -n Z_f^2 + 1$.
\item[$(3)$] $e(x \in X)=-Z_f^2$.
\end{enumerate}
\end{prop}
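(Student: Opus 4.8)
\emph{Strategy and the cases $n \le 1$.} The plan is to study the ideals $A_n := H^0(Y, \sO_Y(-nZ_f)) \subseteq H^0(Y, \sO_Y) = \sO_{X,x}$ (the last equality holds since $f_*\sO_Y = \sO_X$ by normality of $X$), to prove $A_n = \m_x^n$, and then to extract (2) and (3) by a Riemann--Roch computation on $Z_f$ followed by Hilbert--Samuel theory. First I would settle $n = 0, 1$ directly: $A_0 = \sO_{X,x}$ is trivial, and the inclusion $A_1 \subseteq \m_x$ is clear, since a section of $\sO_Y(-Z_f)$ vanishes along $E$ and hence the corresponding function vanishes at $x$. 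For the reverse inclusion, take $0 \ne g \in \m_x$ and write $\Div(f^*g) = T + D_g$ with $D_g$ its exceptional part; then $D_g$ is effective and nonzero (as $f^*g$ vanishes along each $E_i$), a principal divisor has degree $0$ on every $E_i$, and $T \cdot E_i \ge 0$, so by Proposition \ref{intersection} we get $D_g \cdot E_i = -\,T\cdot E_i \le 0$ for all $i$, i.e. $-D_g$ is $f$-nef. Minimality of the fundamental cycle then forces $D_g \ge Z_f$, so $g \in A_1$; hence $A_1 = \m_x$.

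\emph{The cohomological core.} The technical engine is a pair of statements that hold precisely because $(x \in X)$ is rational: for every effective cycle $C$ supported on $E$ with $-C$ $f$-nef, (i) $H^1(Y, \sO_Y(-C)) = 0$, and (ii) $\sO_Y(-C)$ is generated by its global sections relative to $f$. One may enlarge $Y$ at will here, since $\m_x^n$, $H^0(\sO_Y(-nZ_f))$ and $Z_f^2$ are all independent of the chosen resolution, so that (ii) for $C = Z_f$ amounts to $\m_x\sO_Y = \sO_Y(-Z_f)$. I would prove (i) by induction along a computation sequence that builds $C$ one component $E_i$ at a time, at each stage comparing Euler characteristics via Proposition \ref{RR} and invoking the rationality input $\chi(Z,0) \ge 1$ for effective cycles $Z$ on $E$, together with $R^{\ge 2} f_* = 0$ (the fibers of $f$ are at most one-dimensional); statement (ii) then follows by the standard base-point argument. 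I expect this to be the main obstacle: (i) and (ii) are genuinely intertwined, and it is exactly here that rationality must be exploited in an essential way rather than quoted as a black box. As a first dividend, combining (i) for $C = 0$ and $C = Z_f$ with the case $n = 1$ and $R^{\ge 2} f_* = 0$ gives $H^0(Z_f,\sO_{Z_f}) = k(x)$ and $H^1(Z_f,\sO_{Z_f}) = 0$, i.e. $\chi(Z_f, 0) = 1$.

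\emph{Part (1).} Granting (i) and (ii), part (1) follows by showing $\m_x \cdot A_n = A_{n+1}$, i.e. that the multiplication map $H^0(\sO_Y(-Z_f)) \otimes H^0(\sO_Y(-nZ_f)) \to H^0(\sO_Y(-(n+1)Z_f))$ is surjective. Global generation (ii) presents $\sO_Y(-Z_f)$ as a quotient of a free sheaf; twisting the resulting short exact sequence by $\sO_Y(-nZ_f)$ and passing to cohomology reduces the surjectivity to the vanishing of an $H^1$ of a twist of the kernel, which is controlled by (i). Since $A_1 = \m_x$, induction then yields $A_{n+1} = \m_x A_n = \cdots = \m_x^{n+1}$, which is (1).

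\emph{Parts (2) and (3).} For (2), tensoring $0 \to \sO_Y(-Z_f) \to \sO_Y \to \sO_{Z_f} \to 0$ by $\sO_Y(-nZ_f)$ and using $H^1(\sO_Y(-(n+1)Z_f)) = 0$ from (i) produces a short exact sequence $0 \to A_{n+1} \to A_n \to H^0(Z_f, \sO_{Z_f}(-nZ_f)) \to 0$, which together with (1) gives $\m_x^n/\m_x^{n+1} \cong H^0(Z_f, \sO_{Z_f}(-nZ_f))$. The long exact sequence, combined with $H^1(\sO_Y(-nZ_f)) = 0$ and $H^2(\sO_Y(-(n+1)Z_f)) = 0$, forces $H^1(Z_f, \sO_{Z_f}(-nZ_f)) = 0$, so Proposition \ref{RR} computes $\dim_{k(x)} H^0(Z_f, \sO_{Z_f}(-nZ_f)) = \chi(Z_f,-nZ_f) = \chi(Z_f,0) - nZ_f^2 = 1 - nZ_f^2$, which is (2). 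Finally, summing (2) gives $\dim_{k(x)} \sO_{X,x}/\m_x^{n+1} = \tfrac{-Z_f^2}{2}\,n^2 + O(n)$; since $\dim \sO_{X,x} = 2$, the Hilbert--Samuel multiplicity is the normalized leading coefficient, so $e(x \in X) = -Z_f^2$, proving (3).
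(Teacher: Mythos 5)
Your architecture is essentially Artin's original argument, and it diverges from the paper's proof exactly in part (1). The paper does not bootstrap through multiplication maps at all: it quotes \cite{Ar} (Theorem 4) for the sheaf-level identity $\m_x^n\sO_Y=\sO_Y(-nZ_f)$ for \emph{all} $n$, and then descends to the ideal level in one stroke via Lipman's theorem that in a rational surface singularity products of integrally closed ideals are integrally closed \cite{Li}: since $\m_x$ is integrally closed, so is $\m_x^n$, whence $\m_x^n=H^0(Y,\m_x^n\sO_Y)=H^0(Y,\sO_Y(-nZ_f))$. Your treatments of (2) and (3) coincide with the paper's (same exact sequence, same $H^1$-vanishing -- which the paper cites from \cite{Ko}, Proposition 10.9(1) -- same use of Proposition \ref{RR} and of Hilbert--Samuel theory), and your direct proof that $H^0(Y,\sO_Y(-Z_f))=\m_x$ via the fundamental cycle is correct, modulo the standard lattice fact (implicit in \cite{Ko}, Claim 10.3.6) that the componentwise minimum of two effective anti-$f$-nef cycles is again anti-$f$-nef, which is what upgrades ``minimal'' to ``smallest'' and justifies $D_g\ge Z_f$.

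There are, however, two genuine soft spots. First, in the step $\m_x A_n=A_{n+1}$: the kernel $K$ of the evaluation map $H^0(\sO_Y(-Z_f))\otimes\sO_Y\to\sO_Y(-Z_f)$ is in general a vector bundle of rank $\ge 2$, \emph{not} a sheaf of the form $\sO_Y(-C)$, so the vanishing of $H^1(K\otimes\sO_Y(-nZ_f))$ is not an instance of your (i), contrary to what you assert. This is repairable with tools you already have on the table: run the full Koszul complex of the evaluation map, after which the required vanishings are $H^1(\sO_Y(-(n-1)Z_f))=0$ (your (i)) together with $H^{\ge 2}(Y,-)=0$ for all coherent sheaves (your $R^{\ge 2}f_*=0$); alternatively one could take two generating sections so that $K\cong\sO_Y(Z_f)$ by the determinant trick, but two generating sections amount to a $2$-generated reduction of $\m_x$, which needs $k(x)$ infinite -- a hypothesis not available here. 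Second, and more seriously, your ``cohomological core'' (i)+(ii) is not a lemma to be sketched: it is precisely Lipman's Theorem 12.1 (equivalently \cite{Ko}, Proposition 10.9(1)), i.e., the result the paper handles by citation. Your outline (computation sequences, $\chi(Z,0)\ge 1$, fiber dimension) names the right ingredients, but the induction requires at each stage a component $E_i$ with $H^1(E_i,(D-Z')|_{E_i})=0$, and arranging this needs $H^1(E_i,\sO_{E_i})=0$ extracted from $\chi\ge 1$ with bookkeeping over the field $H^0(E_i,\sO_{E_i})\supseteq k(x)$ (which need not be algebraically closed), duality on integral curves over $k(x)$, and a nontrivial choice of the ordering -- this is exactly where rationality does its work, and it is not routine. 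Either carry this out in full or cite it as the paper does; as written, the proposal's load-bearing step is unproved. A minor point: your claim that $Z_f^2$ and $H^0(\sO_Y(-nZ_f))$ are independent of the chosen resolution is usually \emph{deduced} from this very proposition, so invoking it is circular; since you never actually use it, delete it.
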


\begin{proof}
First note that $\m_x^n \sO_Y=\sO_Y(-n Z_f)$, which follows from essentially the same argument as for \cite[Theorem 4]{Ar}, where $X$ is assumed to be defined over an algebraically closed field. 
Since in a rational surface singularity, the product of integrally closed ideals is again integrally closed (see \cite[Theorem 7.2]{Li}), $\m_x^n$ is integrally closed. 
Therefore, 
\[\m_x^n=H^0(Y, \m^n_x \sO_Y)= H^0(Y, \sO_Y(-n Z_f)).\] 

For (2), we consider the exact sequence 
\[0 \to \sO_Y(-(\ell+1)Z_f) \to \sO_Y(-\ell Z_f) \to \sO_{Z_f}(- \ell Z_f) \to 0\]
for each integer $\ell \ge 0$. 
Since $H^1(Y, \sO_Y(-\ell Z_f))=H^1(Y, \sO_Y(-(\ell+1) Z_f))=0$ by \cite[Proposition 10.9 (1)]{Ko}, 
\begin{align*}
\chi (Z_f, -\ell Z_f)&=\dim_{k(x)} H^0(Z_f, \sO_{Z_f}(-\ell Z_f))\\
&=\dim_{k(x)} H^0(Y, \sO_Y(-\ell Z_f))/H^0(Y, \sO_Y(-(\ell+1) Z_f))\\
&=\dim_{k(x)} \m_x^{\ell}/\m_x^{\ell+1}, 
\end{align*}
where the last equality follows from (1). 
In particular, $\chi(Z_f , 0)=1$.
Thus, wee see from Proposition \ref{RR} that 
\[
\dim_{k(x)} \m_x^{n}/\m_x^{n+1}=\chi (Z_f, -n Z_f)=1-n Z_f^2.
\]

Finally, (3) immediately follows from (2), because 
\[
\dim_{k(x)} \sO_{X,x}/\m_x^{n}=\sum_{i=0}^{n-1} \dim_{k(x)} \m_x^{i}/\m_x^{i+1}=\frac{-Z_f^2}{2}n^2+O(n). 
\] 
\end{proof}

We will use the following result to prove the canonical case of the main theorem in $\S3$. 
\begin{prop}\label{2-dim canonical}
A normal surface singularity $(x \in X)$ is canonical if and only if $(x \in X)$ is either a regular point or a rational double point. In particular, a canonical surface singularity is a hypersurface singularity. 
\end{prop}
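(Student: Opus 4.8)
The plan is to reduce everything to the minimal resolution $f:Y\to X$ and to the numerical data carried by its fundamental cycle $Z_f$, so that the classical equivalence ``canonical $\Leftrightarrow$ Du Val'' is extracted purely from the intrinsic tools of \S2 (the intersection pairing and its negative-definiteness, Riemann--Roch, and Artin's formulas) rather than from any classification over an algebraically closed field. Two easy observations set the stage: a regular point is obviously canonical, and every normal surface singularity is automatically Cohen--Macaulay, since normality together with $\dim=2$ gives $S_2$, hence CM.

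For the implication ``canonical $\Rightarrow$ regular or rational double point'', I would assume $(x\in X)$ is canonical and not regular, so that $f$ is nontrivial with exceptional curves $E=\bigcup_iE_i$. Since canonical forces all discrepancies $>-1$, it is in particular plt, so Proposition \ref{klt rational}(2) gives that $(x\in X)$ is a rational singularity. Writing $K_Y=f^*K_X+\sum_i a_iE_i$ with all $a_i\ge 0$, minimality gives $K_Y\cdot E_j\ge 0$, and since $f^*K_X\cdot E_j=0$ the exceptional $\Q$-divisor $D:=\sum_i a_iE_i$ satisfies $D\cdot E_j\ge 0$ for all $j$. Then $D^2=\sum_i a_i(D\cdot E_i)\ge 0$, which forces $D=0$ by the negative-definiteness of Proposition \ref{intersection}(3); thus the minimal resolution is crepant, $K_Y=f^*K_X$. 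Applying Proposition \ref{RR} to $Z_f$ and using $\chi(Z_f,0)=1$ (established under rationality in the proof of Proposition \ref{rat fund}) gives $(-K_Y-Z_f)\cdot Z_f=2$, while crepancy gives $K_Y\cdot Z_f=f^*K_X\cdot Z_f=0$; hence $-Z_f^2=2$, and by Proposition \ref{rat fund}(3) the multiplicity is $e(x\in X)=-Z_f^2=2$, so $(x\in X)$ is a rational double point.

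For the converse, the regular case is clear, so I would take $(x\in X)$ to be a rational double point, i.e. a rational singularity of multiplicity $2$. By Proposition \ref{rat fund}(2) its embedding dimension is $\dim_{k(x)}\m_x/\m_x^2=-Z_f^2+1=3$; writing the completion as $R/I$ with $R$ regular local of dimension $3$ (Cohen's structure theorem, in the equicharacteristic setting) and using that $\sO_{X,x}$ is Cohen--Macaulay of depth $2$, the Auslander--Buchsbaum formula gives $\mathrm{pd}_R(R/I)=1$, whence $I$ is principal and $(x\in X)$ is a hypersurface, in particular Gorenstein with $K_X$ Cartier. Writing $K_Y=f^*K_X+\sum_i a_iE_i$ on the minimal resolution, minimality gives $K_Y\cdot E_j\ge 0$ and the same Riemann--Roch computation gives $K_Y\cdot Z_f=0$; since $Z_f=\sum_j z_jE_j$ with every $z_j\ge 1$, the vanishing $\sum_j z_j(K_Y\cdot E_j)=0$ forces $K_Y\cdot E_j=0$ for all $j$, so $D\cdot E_j=0$ and negative-definiteness yields $D=0$, i.e. the minimal resolution is again crepant. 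Finally, any resolution is dominated by one obtained from $Y$ by successively blowing up regular points, each contributing discrepancy $\ge 0$, so all discrepancies over $X$ are $\ge 0$ and $(x\in X)$ is canonical. The ``in particular'' (hypersurface) statement is exactly the embedding-dimension computation carried out above.

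The hard part will be the non-formal step, namely promoting ``embedding dimension $3$'' to ``cut out by one equation'': this is where Cohen--Macaulayness of $\sO_{X,x}$ and the Auslander--Buchsbaum computation are needed, and one must check that this completion argument is insensitive to $k(x)$ not being algebraically closed (it is, since $R$ is an equicharacteristic regular local ring). The remaining subtlety is bookkeeping: every use of $D^2\ge 0$ versus negative-definiteness, and of $f^*K_X\cdot E_j=0$, must be run over the possibly non-closed residue field $k(x)$, which is precisely why the intrinsic intersection-theoretic package of \S2 replaces the classical surface classification here.
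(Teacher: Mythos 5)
Your proof is correct and follows essentially the same route as the paper's: reduce to rational singularities via plt, then use the minimal resolution together with Riemann--Roch and Artin's formulas (Propositions \ref{RR} and \ref{rat fund}) to identify ``canonical'' with ``crepant minimal resolution,'' with $Z_f^2=-2$, and hence with multiplicity $2$. The differences are only matters of detail: you invoke negative-definiteness directly where the paper cites Koll\'ar's negativity lemma, and you spell out two points the paper leaves to the reader --- the hypersurface claim (embedding dimension $3$ plus Cohen's structure theorem and Auslander--Buchsbaum, which also settles that $K_X$ is $\Q$-Cartier in the converse direction) and the final ``crepant over a regular surface implies canonical'' step via factorization into point blow-ups.
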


\begin{proof}
We may assume by Proposition \ref{klt rational} that $(x \in X)$ is a rational singularity. 
Since the assertion is obvious when $X$ is regular, we can also assume that $(x \in X)$ is not regular.

Suppose that $f:Y \to X$ is a minimal resolution, and let $\Delta=f^*K_X-K_Y$. 
Then $\Delta$ is an anti-$f$-nef $\Q$-divisor on $Y$, so $\Delta \ge 0$ by \cite[Claim 10.3.5]{Ko}. 
On the other hand, it follows from Proposition \ref{RR} that 
\[
2=2 \chi(Z_f, 0)=-(K_Y+Z_f) \cdot Z_f=-K_Y \cdot Z_f-Z_f^2=\Delta \cdot Z_f - Z_f^2, 
\]
which implies that $\Delta \cdot Z_f=Z_f^2+2$. 

If $(x \in X)$ is a canonical singularity, then we have $\Delta=0$ and hence $Z_f^2=-2$.
We see by Proposition \ref{rat fund} (3) that $e(x \in X)=2$, that is, $(x \in X)$ is a rational double point. 
Using Proposition \ref{rat fund} (2), (3), we can verify that every rational double point is a hypersurface singularity.

Conversely, if $(x \in X)$ is a rational double point, then $Z_f^2=-2$ by Proposition \ref{rat fund} (3) and therefore $\Delta \cdot Z_f=0$.
Since $-\Delta$ is $f$-nef and $\mathrm{Supp} \, Z_f=E$, the intersection number $\Delta \cdot E_i$ has to be zero for all $i$.
This means by \cite[Claim 10.3.5]{Ko} that $\Delta=0$. 
It then follows from the fact that $K_Y+\Delta=f^*K_X$ and $(Y, \Delta)=(Y, 0)$ is canonical that $(x \in X)$ is a canonical singularity. 
\end{proof}

%%%%%%%%%%%%%%%%%%%%%%%%%%%%%%%%%%%%%%%%%%%%%%%%%%%%%%%%%%%%%%%%%%%%%%%%%%%%%%
\section{Terminal and canonical singularities}
%%%%%%%%%%%%%%%%%%%%%%%%%%%%%%%%%%%%%%%%%%%%%%%%%%%%%%%%%%%%%%%%%%%%%%%%%%%%%%

The terminal case of the main theorem is an immediate consequence of a Bertini theorem for Hilbert-Samuel multiplicity. 
\begin{prop}\label{terminal}
Let $X$ be a three-dimensional normal quasi-projective variety over an algebraically closed field of characteristic $p>0$ and $\Delta$ be an effective $\Q$-divisor on $X$ such that $K_X+\Delta$ is $\Q$-Cartier.  
Suppose that the pair $(X, \Delta)$ is terminal. 
Then $(H, \Delta|_H)$ is terminal for a general hyperplane section $H$ of $X$.  
In particular, $H$ is smooth. 
\end{prop}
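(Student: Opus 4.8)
The plan is to reduce the terminal case to a statement about Hilbert--Samuel multiplicity, exploiting the fact that three-dimensional terminal singularities are isolated. First I would recall that if $(X,\Delta)$ is terminal and three-dimensional, then $X$ itself is terminal in codimension two (since the log discrepancies only get larger after restricting to the generic points of $\Delta$), and the non-terminal locus is contained in the singular locus, which is a finite set of closed points. Concretely, a three-dimensional terminal variety is smooth in codimension two, so $\Sing X$ is zero-dimensional; away from these finitely many points, $X$ is regular and a general hyperplane section through a smooth point is again smooth by a standard Bertini argument over the (possibly imperfect, but here we are over an algebraically closed field $k$) ground field.

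The substantive work is therefore local at each of the finitely many closed points $x\in\Sing X$. Since $(x\in X,\Delta)$ is terminal, it is in particular canonical, and I would invoke the characterization of terminal threefold points: by the classification (valid in our setting because $\dim X=3$ lets us test the discrepancy condition on a single resolution, as noted in the Remark following Definition~\ref{mmp}), a terminal threefold point has Hilbert--Samuel multiplicity controlled so that a general hyperplane section has multiplicity one, i.e.\ is regular at $x$. The cleanest route is to use a \emph{Bertini theorem for Hilbert--Samuel multiplicity}: for a general hyperplane $H$ through a fixed closed point $x$, the multiplicity $e(x\in H)$ equals $e(x\in X)$ when $\dim\sO_{X,x}\ge 2$, and the embedding dimension drops by one. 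Since a terminal threefold singularity is an isolated point where $X$ is $\Q$-factorial and terminal, the general section has a surface singularity that is again terminal; but a two-dimensional terminal singularity is regular, so $H$ is smooth at $x$.

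Assembling these, I would argue as follows. For the general $H$, the pair $(H,\Delta|_H)$ is well-defined because $H$ meets $\Supp\Delta$ properly and avoids the zero-dimensional bad locus in a controlled way; then at every point of $H\setminus\Sing X$ the section is smooth, and at each of the finitely many points of $\Sing X$ lying on $H$ the previous paragraph shows $H$ is regular. Hence $H$ is smooth everywhere. To upgrade ``smooth'' to ``$(H,\Delta|_H)$ terminal,'' I would use adjunction-type behavior of discrepancies under restriction to a general member of a linear system together with the fact that the coefficients of $\Delta|_H$ are inherited from $\Delta$, so that the terminal condition for $(H,\Delta|_H)$ reduces to the terminal condition for $(X,\Delta)$ after adding the general divisor $H$ to the boundary and applying inversion-of-adjunction-style estimates on a log resolution.

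The main obstacle I anticipate is precisely the absence of the classical Bertini theorem for base-point-free linear systems in characteristic $p>0$, which is why one cannot simply quote Reid's argument. The technical heart is the Bertini statement for Hilbert--Samuel multiplicity (and for the resulting discrepancies) that survives in positive characteristic; making this genericity argument rigorous---so that for $H$ in a dense open subset of the relevant linear system the multiplicity and embedding dimension behave as expected at the finitely many singular points simultaneously---is where the real care lies. Everything else (reduction to finitely many points via isolatedness of terminal threefold singularities, and the implication that a terminal surface point is regular) is comparatively formal.
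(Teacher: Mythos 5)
Your overall frame (reduce to finitely many bad points, then use a characteristic-free Bertini theorem for Hilbert--Samuel multiplicity) matches the paper's, but two steps of your argument are wrong or unavailable, and they are exactly the steps where the content lies. First, your local analysis at the points of $\Sing X$ is both unnecessary and false: a general hyperplane section avoids any finite set of closed points, so no such analysis is needed; and the claim you make there --- that a general hyperplane section through an isolated terminal threefold point is again terminal, hence regular --- is incorrect. Restricting to a hyperplane \emph{through} the point does not preserve terminality: for the ordinary double point $(xy+zw=0)\subset \mathbb{A}^4_k$, which is terminal, every hyperplane section through the origin has multiplicity $2$ there, and the general one is an $A_1$ rational double point, not a smooth surface (this is the ``general elephant''/cDV phenomenon: such sections are Du Val, not regular).

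Second, and more seriously, your upgrade from ``$H$ is smooth'' to ``$(H,\Delta|_H)$ is terminal'' invokes adjunction and inversion-of-adjunction-style estimates for discrepancies on a log resolution applied to a general member of a linear system --- but that is precisely Reid's characteristic-zero argument, which is unavailable here: the pullback of the hyperplane system to a resolution is merely base-point-free, and Bertini for base-point-free systems fails in characteristic $p$. The paper sidesteps all discrepancy computations on $H$ by using Koll\'ar's numerical characterization of two-dimensional terminal \emph{pairs} (\cite[Theorem 2.29 (1)]{Ko}): $(S,B)$ is terminal if and only if $S$ is regular and $\mathrm{mult}_x(B)<1$ at every point $x$. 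Applied at codimension-two points of $X$, this shows that the open locus $U=\{x \in X : X \text{ regular at } x,\ \mathrm{mult}_x(\Delta)<1\}$ misses only finitely many closed points; note that one must exclude not just $\Sing X$ but also the locus where $\mathrm{mult}_x(\Delta)\ge 1$, which your argument never controls (you track where $X$ is terminal, but not where the boundary has small multiplicity). Then \cite[Proposition 4.5]{dFEM} gives that a general $H$ is smooth with $\mathrm{mult}_x(\Delta|_H)=\mathrm{mult}_x(\Delta)<1$ at all relevant points, and the same characterization, read in the other direction on $H$, yields terminality of $(H,\Delta|_H)$ with no resolution of $H$ ever being taken.
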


\begin{proof}
Let $U$ be the locus of the points $x \in X$ such that $X$ is regular at $x$ and $\mathrm{mult}_x(\Delta) < 1$. 
Since the regular locus of $X$ is open and Hilbert-Samuel multiplicity is upper semicontinuous, 
$U$ is an open subset of $X$. 
Then every codimension two point $x \in X$ lies in $U$ by \cite[Theorem 2.29 (1)]{Ko}, because  
$(\Spec \sO_{X,x}, \Delta_x)$ is a two-dimensional terminal pair. 
Hence, $X \setminus U$ consists of only finitely many closed points and a general hyperplane section $H$ of $X$ is contained in $U$. 
It follows from a Bertini theorem for Hilbert-Samuel multiplicity \cite[Proposition 4.5]{dFEM}, which holds in arbitrary characteristic, that $H$ is smooth and $\mathrm{mult}_x(\Delta|_{H})=\mathrm{mult}_x(\Delta) <1$ for all $x \in \mathrm{Supp}\, \Delta|_{H}$. 
Thus, applying \cite[Theorem 2.29 (1)]{Ko} again, we see that $(H, \Delta|_{H})$ is terminal. 
\end{proof}

The canonical case of the main theorem can be deduced from a Bertini theorem for MJ-canonical singularities. 
The proof was inspired by a discussion with Shihoko Ishii, whom we thank. 

\begin{thm}\label{canonical}
Let $X$ be a three-dimensional normal quasi-projective variety over an algebraically closed field $k$ of characteristic $p>0$ and $\Delta$ be an effective $\Q$-divisor on $X$ such that $K_X+\Delta$ is $\Q$-Cartier.  
Suppose that the pair $(X, \Delta)$ is canonical. 
Then $(H, \Delta|_H)$ is canonical for a general hyperplane section $H$ of $X$. In particular, $H$ has only rational double points. 
\end{thm}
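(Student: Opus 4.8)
The plan is to reduce the statement to the application of the Ishii--Reguera Bertini theorem for MJ-canonical singularities, following the same two-stage strategy used for the terminal case in Proposition~\ref{terminal}. The essential input, flagged in the introduction, is that the canonical threefold $(X,\Delta)$ is MJ-canonical away from finitely many closed points; once this is established, a general hyperplane section avoids the bad locus and inherits MJ-canonical (hence canonical) singularities.

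First I would analyze the local structure at every codimension-two point $x\in X$. The local ring $\sO_{X,x}$ is a two-dimensional excellent normal local ring, and $(\Spec\sO_{X,x},\Delta_x)$ is canonical by hypothesis; note this surface singularity is \emph{not} defined over an algebraically closed field, which is exactly the reason the results of \S3 were developed in that generality. When $\Delta_x=0$, Proposition~\ref{2-dim canonical} tells us that $\sO_{X,x}$ is either regular or a rational double point, and in particular a \emph{hypersurface} singularity. This is the crucial dimension-two observation: for a hypersurface (more generally, a locally complete intersection) singularity the Mather--Jacobian discrepancies agree with the ordinary discrepancies, so canonical forces MJ-canonical at such a point. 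The subtlety I would have to address is the presence of a nonzero boundary $\Delta$: I would want to argue that the generic behaviour of $\Delta$ along codimension-two points, combined with the hypersurface structure of $X$ there, still yields MJ-canonicity of the pair, perhaps by passing to the case $\Delta=0$ on a suitable open set or by a direct discrepancy comparison for complete intersections.

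Next I would package these pointwise facts into an openness statement. The locus where $X$ is a hypersurface (lci) singularity and where the MJ-canonical condition for $(X,\Delta)$ holds should be open, by upper semicontinuity of the relevant jet-scheme/discrepancy invariants and by the constructibility of the lci locus. Combined with the codimension-two analysis above, this shows that the complement of the MJ-canonical locus consists of only finitely many closed points of $X$. Therefore a general hyperplane section $H$ misses this finite set, so $(X,\Delta)$ is MJ-canonical along $H$.

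Finally I would invoke the Bertini theorem for MJ-canonical singularities of Ishii--Reguera~\cite{IR}: since $(X,\Delta)$ is MJ-canonical in a neighborhood of a general $H$, the pair $(H,\Delta|_H)$ is again MJ-canonical. As $H$ is a surface, MJ-canonical and canonical coincide for the relevant class of singularities, so $(H,\Delta|_H)$ is canonical, and by Proposition~\ref{2-dim canonical} $H$ has only rational double points. I expect the main obstacle to be the first step---verifying MJ-canonicity at codimension-two points in the presence of the boundary $\Delta$---because one must correctly reconcile the Mather--Jacobian discrepancies with the usual ones over a non-algebraically-closed residue field, relying on the hypersurface structure provided by Proposition~\ref{2-dim canonical} rather than on any Bertini-type statement in characteristic zero.
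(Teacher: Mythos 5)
Your treatment of the case $\Delta=0$ is exactly the paper's argument: Proposition \ref{2-dim canonical} at codimension-two points, openness of the l.c.i.\ locus \cite{GM}, the fact that l.c.i.\ canonical singularities are MJ-canonical, and the Ishii--Reguera Bertini theorem \cite[Corollary 4.11]{IR}. (One small economy: you do not need openness of the MJ-canonical locus or any semicontinuity of jet invariants --- openness of the l.c.i.\ locus alone already shows the non-MJ-canonical locus is a finite set of closed points.) The genuine gap is in the case $\Delta \neq 0$, which you yourself flag as the main obstacle but do not resolve. Of your two suggested routes, one is left unexecuted, and the other (a ``direct discrepancy comparison for complete intersections'' with boundary) would force you to invoke a Bertini theorem for MJ-canonical \emph{pairs}, something the paper deliberately avoids: it only ever applies \cite[Corollary 4.11]{IR} on the locus where $\Delta$ vanishes.

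The missing idea is Koll\'ar's structure theorem for two-dimensional canonical pairs, \cite[Theorem 2.29 (2)]{Ko}: at a codimension-two point $x$, either $x \notin \Supp\Delta$, or $\sO_{X,x}$ is \emph{regular} and $\mathrm{mult}_x(\Delta) \le 1$. (The boundary cannot pass through a Du Val point of a canonical pair: the exceptional divisors over such a point have discrepancy $0$, and subtracting the multiplicity of the pullback of $\Delta$ would make some discrepancy negative.) So the scenario you worry about --- reconciling MJ-discrepancies of a pair at a singular hypersurface point --- simply never occurs in codimension two. This yields, after discarding finitely many closed points, an open cover $X = U_1 \cup U_2$, where $U_1$ is the locus where $X$ is regular and $\mathrm{mult}_x(\Delta)\le 1$, and $U_2 = X \setminus \Supp\Delta$. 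On $U_2$ your $\Delta=0$ argument applies verbatim. On $U_1$ the paper uses the second ingredient you are missing, the Bertini theorem for Hilbert--Samuel multiplicity \cite[Proposition 4.5]{dFEM}, which gives that $H \cap U_1$ is smooth with $\mathrm{mult}_x(\Delta|_{H\cap U_1}) = \mathrm{mult}_x(\Delta) \le 1$ for all $x \in \Supp \Delta|_{H \cap U_1}$, whence $(H \cap U_1, \Delta|_{H \cap U_1})$ is canonical by \cite[Theorem 2.29 (2)]{Ko} again. Without these two ingredients (or, alternatively, a verified MJ-Bertini theorem for pairs), your proof does not go through for nonzero $\Delta$.
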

\begin{proof}
First we will show the case where $\Delta=0$. 
Since $X$ has only canonical singularities, $\Spec \sO_{X,x}$ is a two-dimensional scheme with only canonical singularities for every codimension two point $x \in X$. 
It then follows from Proposition \ref{2-dim canonical} that $\sO_{X,x}$ is a hypersurface singularity for every codimension two point $x \in X$. 
Since the l.c.i.~locus of $X$ is open by \cite{GM}, $X$ is a l.c.i.~except at finitely many closed points. 
By the fact that l.c.i.~canonical singularities are MJ-canonical (see for example \cite[Remark 2.7 (iv)]{IR}), $X$ has only MJ-canonical singularities except at finitely many closed points. 
Thus, applying a Bertini theorem for MJ-canonical singularities \cite[Corollary 4.11]{IR}, we see that $H$ has only rational double points. 

Next we consider the case where $\Delta \ne 0$. 
Let $U_1$ be the locus of the points $x \in X$ such that $X$ is regular at $x$ and $\mathrm{mult}_x(\Delta) \le 1$, and $U_2=X \setminus \mathrm{Supp} \,\Delta$. 
Note that $U_1$ and $U_2$ are both open subsets of $X$ (the openness of $U_1$ follows from the openness of the regular locus of $X$ and the upper-semicontinuity of Hilbert-Samuel multiplicity). 
Then every codimension two point $x \in X$ lies in $U_1 \cup U_2$ by \cite[Theorem 2.29 (2)]{Ko}, because $(\Spec \sO_{X, x}, \Delta_x)$ is a two-dimensional canonical pair. 
Hence, we may assume that $X=U_1 \cup U_2$. 
We have already seen that the assertion holds when $\Delta=0$, so $(H \cap U_2, \Delta|_{H \cap U_2})=(H \cap U_2, 0)$ is canonical for a general hyperplane section $H$ of $X$. 
On the other hand, it follows from a Bertini theorem for Hilbert-Samuel multiplicity \cite[Proposition 4.5]{dFEM} that $H \cap U_1 \subsetneq U_1$ is smooth and $\mathrm{mult}_x(\Delta|_{H \cap U_1})=\mathrm{mult}_x(\Delta) \le 1$ for all $x \in \mathrm{Supp}\, \Delta|_{H \cap U_1}$. 
Thus, applying \cite[Theorem 2.29 (2)]{Ko} again, we see that $(H \cap U_1, \Delta|_{H \cap U_1})$ is canonical. 
\end{proof}

\begin{cor}\label{product}
Let $X$ be a three-dimensional quasi-projective variety over an algebraically closed field $k$ of characteristic $p \ge 5$ with only canonical singularities. 
Then there exists a zero-dimensional closed subscheme $Z \subset X$ with the following property: for every closed point $x \in X \setminus Z$, the maximal-adic completion $\widehat{\sO_{X,x}}$ of the local ring of $X$ at $x$ is either regular or isomorphic to the formal tensor product $k[[u,v,w]]/(f) \widehat{\otimes}_k k[[t]]$ of a rational double point $k[[u,v,w]]/(f)$ and the one-dimensional formal power series ring $k[[t]]$. 
\end{cor}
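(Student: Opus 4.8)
Looking at Corollary \ref{product}, I need to prove that away from finitely many closed points, the completion of a canonical threefold is either regular or a product of a rational double point with a line.

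The plan is to combine Theorem \ref{canonical} with a slicing argument that promotes the two-dimensional analysis to the three-dimensional completion. First I would consider the codimension-two points of $X$. By the same reasoning as in the proof of Theorem \ref{canonical}, for every codimension two point $x \in X$ the local ring $\sO_{X,x}$ is a two-dimensional canonical singularity, hence by Proposition \ref{2-dim canonical} is either regular or a rational double point. The strategy is then to spread this out: the set of closed points $y \in X$ at which $X$ fails to be a locally trivial family of rational double points over a smooth one-dimensional base should be a closed subset not containing any codimension-two point, hence zero-dimensional.

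\textbf{Main steps.} The key step is to show that a general hyperplane section $H$ through a fixed closed point $x$ meets the singular locus transversally and cuts out a rational double point on $H$. By Theorem \ref{canonical}, a general hyperplane section $H$ has only rational double points. For a closed point $x$ lying on the (one-dimensional, by the codimension-two analysis) singular locus of $X$, I would choose $H$ general through $x$ so that $H$ is smooth away from $x$ near $x$ and has a rational double point at $x$. Then $\widehat{\sO_{H,x}} \cong k[[u,v,w]]/(f)$ for some rational double point equation $f$. To reconstruct the three-dimensional completion, I would use that the singular locus $C = \Sing X$ is a smooth curve near all but finitely many of its points (the non-smooth points of $C$ and the finitely many non-l.c.i.\ closed points of $X$ are collected into $Z$), and that the hyperplane section transverse to $C$ recovers a transverse slice. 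The product structure $\widehat{\sO_{X,x}} \cong k[[u,v,w]]/(f)\,\widehat{\otimes}_k k[[t]]$ then follows by choosing $t$ to be a local equation defining $H$ and showing the family is analytically trivial along $C$.

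\textbf{The obstacle.} The hard part will be establishing the analytic triviality of the family of rational double points along the singular curve $C$, that is, promoting ``the slice is a rational double point'' to ``the completion is a genuine product.'' In characteristic zero this is where Reid's equisingularity argument enters, but in positive characteristic one must be careful, since the counterexamples of Hirokado-Ito-Saito \cite{HIS} and Hirokado \cite{Hi} in characteristics two and three show that such product structure can genuinely fail. This is precisely why the hypothesis $p \ge 5$ appears. I expect the proof to invoke the explicit classification of rational double points in characteristic $p \ge 5$ (where they coincide with the ADE singularities and admit the same defining equations as in characteristic zero), together with an argument that the equisingular locus along $C$ is open, so that after removing the finitely many bad points the versal deformation argument forces local triviality. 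Concretely, I would argue that along a smooth curve of rational double points in characteristic $p \ge 5$, the Hilbert-Samuel multiplicity is constant equal to $2$, the l.c.i.\ property holds, and one can apply a relative version of the classification to split off the $k[[t]]$ factor; collecting all points where any of these hypotheses fails into the zero-dimensional subscheme $Z$ yields the claim.
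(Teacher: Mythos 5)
There is a genuine gap, and it sits exactly where you placed your flag marked ``the obstacle.'' The paper's proof of Corollary \ref{product} is one line: it feeds Theorem \ref{canonical} into \cite[Theorem 3]{HIS}, and it is that cited theorem of Hirokado--Ito--Saito which performs the entire passage from ``transverse slices are rational double points'' to ``the completion is formally a product $k[[u,v,w]]/(f)\,\widehat{\otimes}_k k[[t]]$ away from finitely many points.'' That passage is the real mathematical content of the corollary beyond Theorem \ref{canonical}, and it cannot be dispatched by a routine equisingularity or versal-deformation argument: Theorem \ref{canonical} holds in \emph{every} positive characteristic, while the product structure genuinely fails in characteristics $2$ and $3$ (the counterexamples of \cite{HIS}, \cite{Hi}). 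So any argument of the shape you sketch (``the equisingular locus along $C$ is open, so the versal deformation argument forces local triviality'') is refuted in characteristics $2$ and $3$ by those very examples, where the slices are still rational double points; a correct proof must use $p \ge 5$ in an essential way, and your sketch never identifies where that happens. Naming ingredients (``a relative version of the classification,'' ``the versal deformation argument'') is a plan, not a proof, and the plan as stated cannot be completed.

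Two of the named ingredients are also factually wrong or unavailable. First, rational double points in characteristic $5$ do \emph{not} ``admit the same defining equations as in characteristic zero'': by Artin's classification, $E_8$ has two non-isomorphic forms in characteristic $5$, so tautness fails there, yet the corollary includes $p=5$. Second, Theorem \ref{canonical} concerns a single \emph{general} hyperplane section, which avoids any prescribed closed point; you cannot invoke it for a hyperplane ``general through $x$,'' since hyperplanes through a fixed point form a special linear subsystem to which the theorem does not apply. (This is another service performed by \cite[Theorem 3]{HIS}, whose hypothesis is verified at the codimension-two points rather than at closed points.) Your opening step --- that every codimension-two point of $X$ is regular or a rational double point, via Proposition \ref{2-dim canonical} --- does agree with the paper, but everything after it needs to be replaced by the citation, or by a substantial argument you have not supplied.
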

\begin{proof}
An immediate application of Theorem \ref{canonical} to \cite[Theorem 3]{HIS} yields the desired result. 
\end{proof}

\begin{rem}
When the characteristic $p$ is less than $5$, there exist counterexamples to Corollary \ref{product} due to Hirokado-Ito-Saito \cite{HIS} and Hirokado \cite{Hi}. 
In particular, in characteristic three, an exhaustive list of examples is given in \cite[Theorem 3]{HIS}. 
\end{rem}

%%%%%%%%%%%%%%%%%%%%%%%%%%%%%%%%%%%%%%%%%%%%%%%%%%%%%%%%%%%%%%%%%%%%%%%%%%%%
\section{Klt singularities}
%%%%%%%%%%%%%%%%%%%%%%%%%%%%%%%%%%%%%%%%%%%%%%%%%%%%%%%%%%%%%%%%%%%%%%%%%%%%

In this section, we will prove the klt case of the main theorem. 
Since we often use the results of \cite{Ko} and \cite{Ta} in this section, the reader is referred to Remark \ref{remark base scheme}. 

First we prepare some definitions and lemmas for Theorem \ref{key thm}. 
\begin{defn}
Let $I \subseteq [0,1]$ be a subset.
We define the subset $D(I) \subseteq [0,1]$ by
\[ D(I) := \left\{ \frac{m-1+ \sum_{j=1}^n i_j}{m} \; \Bigg| \;  m \in \Z_{\ge 1}, n \in \Z_{\ge 0}, i_j \in I \textup{ for } j=1, \dots , n \right\} \cap [0,1] .\]
In particular, $D(\emptyset)=\{1-1/m \, | \, m \in \Z_{\ge 1}\}$ is the set of standard coefficients. 
\end{defn}

\begin{lem}[\textup{cf. \cite[Lemma 4.3]{MP}}]\label{adj coeff}
Let $X$ be a normal surface and $\Delta$ be an effective $\Q$-divisor on $X$ such that $(X, \Delta)$ is plt. 
Let $C$ be a regular curve which is an irreducible component of $\lfloor \Delta \rfloor$ and $\mathrm{Diff}_{C}(B)$ be the different of $B:=\Delta-C$ on $C$ $($see \cite[Definition 2.34]{Ko} for the definition of the $\Q$-divisor $\mathrm{Diff}_{C}(B))$. 
If the coefficients of $B$ belong to a subset $I \subset [0,1] \cap \Q$, then the coefficients of $\mathrm{Diff}_{C}(B)$ belong to $D(I)$.
\end{lem}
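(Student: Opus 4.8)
The plan is to reduce the assertion to a local computation of the coefficient of $\mathrm{Diff}_C(B)$ at each point of $C$, and then to read off that coefficient from the local structure of the plt surface singularity, following the standard adjunction computation of \cite[Lemma 4.3]{MP} but adapted to our non-algebraically-closed setting.

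First I would use that $\mathrm{Diff}_C(B)$ is a local invariant along $C$: its coefficient at a point $P \in C$ depends only on the germ $(P \in X, C+B)$. Fixing $P$ and recalling $B = \Delta - C \ge 0$, the pair $(X, C)$ obtained by discarding the effective part $B$ is still plt near $P$, since removing an effective boundary can only increase discrepancies; hence by Proposition \ref{klt rational} the point $P$ is a rational, and therefore $\Q$-factorial, log terminal surface singularity. In particular $K_X + C$ is $\Q$-Cartier near $P$ and has a well-defined local index $m$, and it suffices to compute the coefficient of $\mathrm{Diff}_C(B)$ at $P$ for a plt germ $(P \in X, C)$ with $C$ regular.

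The conceptual heart of the computation is to pass to the index-one cyclic cover $\pi \colon \tilde X \to X$ of $K_X + C$ of degree $m$, which is ramified along $C$, satisfies $\pi^*(K_X+C) = K_{\tilde X} + \tilde C$ with $\tilde C = (\pi^* C)_{\mathrm{red}}$ regular, and has $\tilde X$ regular over $P$. Since $\pi$ is unramified away from $C$, the pullback $\pi^* B = \tilde B$ has components with the same coefficients as $B$, all lying in $I$. On the regular surface $\tilde X$ adjunction is the ordinary one, so $\mathrm{Diff}_{\tilde C}(\tilde B) = \tilde B|_{\tilde C}$, whose coefficient at a point over $P$ is $\sum_i b_i (\tilde B_i \cdot \tilde C)$, a non-negative-integer combination of the $b_i \in I$, because local intersection multiplicities on a regular surface are non-negative integers (Proposition \ref{intersection}). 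Descending through the degree-$m$ map $\tilde C \to C$ and accounting, by a Riemann–Hurwitz computation, for the total ramification that contributes the summand $(m-1)/m$, one finds
\[
\mathrm{coeff}_P \mathrm{Diff}_C(B) = \frac{m-1+\sum_{j=1}^n i_j}{m},
\]
where each $i_j \in I$ is the coefficient of a component of $B$ and $n$ records the relevant local intersection multiplicities. As $(X, \Delta)$ is plt, $\mathrm{Diff}_C(B)$ is effective with coefficients at most $1$, so this number lies in $[0,1]$ and has exactly the form defining $D(I)$, whence it belongs to $D(I)$.

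The main obstacle is that $X$ is not assumed to be defined over an algebraically closed field, so the residue field $k(P)$ may be imperfect and I cannot simply reduce to an explicit normal form of a cyclic quotient singularity; worse, in characteristic $p$ the cover $\pi$ is wildly ramified when $p \mid m$, and then $\tilde X$ may fail to be regular and the clean formula above can break down. Overcoming this is precisely where the surface theory of Section 3, valid over a general base by \cite{Ko} and \cite{Ta}, enters, and it is the version I would ultimately write out. Rather than relying on the cover, I would run the computation on a log resolution $f \colon Y \to X$ of $(X, C+B)$: writing $K_Y + C_Y + B_Y = f^*(K_X + C + B) + \sum_i a_i E_i$ with $C_Y$ the regular strict transform of $C$, one obtains the identity $\mathrm{Diff}_C(B) = B_Y|_{C_Y} - \sum_i a_i (E_i|_{C_Y})$ on $C \cong C_Y$, and the task becomes to verify, using the intersection theory and the Riemann–Roch formula of Proposition \ref{RR}, that the discrepancies $a_i$ organize into the denominator $m$ while the integers $(E_i \cdot C_Y)$ and $(B_i \cdot C)$ furnish the terms $i_j$ in the numerator. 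This resolution-theoretic route is robust in arbitrary characteristic and avoids the delicate behaviour of the cyclic cover.
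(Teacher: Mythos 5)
Your final strategy (localize at a point of $C$ and compute the coefficient of the different on a log resolution of $(X,C)$) is exactly the route the paper takes, but your write-up stops precisely where the proof has to start. The paper's argument consists of two concrete ingredients: first, the formula $\mathrm{Diff}_{C}(B) = \frac{m-1}{m}[x] + B|_{C}$ from \cite[(3.36.1)]{Ko}, where $m$ is the determinant of the negative of the intersection matrix of the minimal log resolution of $(X,C)$; second, the fact \cite[Proposition 10.9 (3)]{Ko} that $mD$ is Cartier for \emph{every} Weil divisor $D$ on such a singularity, which gives $\mathrm{ord}_{[x]}(mB_j)|_C \in \Z_{\ge 0}$ and hence puts the coefficient in the form $\frac{m-1+\sum_j b_j n_j}{m} \in D(I)$. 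Your proposal supplies neither: the sentence ``the task becomes to verify \dots that the discrepancies $a_i$ organize into the denominator $m$ while the integers $(E_i\cdot C_Y)$ and $(B_i\cdot C)$ furnish the terms $i_j$'' is a restatement of the lemma, not an argument, and no computation or citation is given to discharge it.

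There is also a quantitative point your sketch glosses over that is essential for landing in $D(I)$: an element $\frac{m-1+\sum_j i_j}{m}$ must have the \emph{same} $m$ in both places, so one needs the local coefficients of $B_j|_C$ at $[x]$ to lie in $\frac{1}{m}\Z_{\ge 0}$ for that particular $m$. On the singular surface $X$ these numbers are not integers in general --- your appeal to integrality of local intersection multiplicities is only valid on a regular surface --- and controlling their denominator by exactly $m$ is precisely what \cite[Proposition 10.9 (3)]{Ko} provides; this is nowhere addressed in your proposal. Finally, your cyclic-cover paragraph is correctly self-diagnosed as unusable in positive characteristic (wild ramification when $p \mid m$, index-one covers over possibly imperfect residue fields), so it cannot serve as a substitute. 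As it stands, the proposal has a genuine gap at the heart of the lemma: the one identity that constitutes its content is named as a task rather than proved or properly cited.
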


\begin{proof}
We may assume that $(x \in X)$ is a surface singularity, that is, $X=\Spec R$ for a two-dimensional normal local ring. 
Let $f: Y \to X$ be a minimal log resolution of $(X, C)$ with exceptional curves $E=\bigcup_i E_i$ defined as in \cite[Definition 2.25 (b)]{Ko} and $m$ be the determinant of the negative of the intersection matrix of the $E_i$.
Then by \cite[(3.36.1)]{Ko}, 
\[ \mathrm{Diff}_{C}(B) = \frac{m-1}{m}  [x] + B|_{C}. \]
Let $B=\sum_j b_j B_j$ be the irreducible decomposition of $B$. 
Since $m D$ is Cartier for every Weil divisor $D$ on $X$ by \cite[Proposition 10.9 (3)]{Ko}, we see that $\mathrm{ord}_{[x]} (mB_j)|_C$ is an integer. 
Thus, 
\[
\mathrm{ord}_{[x]}\mathrm{Diff}_{C}(B)=\frac{m-1}{m}+\sum_j \frac{b_j}{m} \mathrm{ord}_{[x]} (mB_j)|_C \in D(I), 
\]
which completes the proof of the proposition. 
\end{proof}

A \textit{log Fano} pair $(X, \Delta)$ is a pair of a normal projective variety over a field $k$ and an effective $\Q$-divisor on $X$ such that $-(K_X+\Delta)$ is ample and $(X, \Delta)$ is klt. 

\begin{prop}[\textup{cf.~\cite[Corollary 4.1]{CGS}, \cite[Theorem 4.2]{Wa}}]\label{CGS P1}
Let $I \subseteq [0,1] \cap \Q$ be a finite set.
There exists a positive constant $p_0(I)$ depending only on $I$ with the following property:
if $k$ is an $F$-finite field of characteristic $p>p_0(I)$ and $(\PP^1_k, B= \sum_{i=1}^m a_i P_i)$ is a log Fano pair such that every coefficient $a_i$ belongs to $D(I)$ and every point $P_i$ is a $k$-rational point of $\PP^1_k$, then $(\PP^1_k, B)$ is globally $F$-regular.
Moreover, when $I= \emptyset$, we may take the constant $p_0(I)$ to be 5. 
\end{prop}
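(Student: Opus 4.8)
The plan is to reduce the assertion to a single Frobenius splitting on $\PP^1_k$ and then to analyze that splitting completely explicitly, exploiting the fact that every coherent sheaf on $\PP^1$ is a direct sum of line bundles. First I would invoke the standard criterion of Schwede--Smith: the pair $(\PP^1_k, B)$ is globally $F$-regular as soon as there exist a single ample Cartier divisor $A$ (say $A=[Q]$ for a $k$-rational point $Q \notin \Supp B$) and a single $e \ge 1$ for which the natural map $\sO_{\PP^1} \to F^e_*\sO_{\PP^1}(\lceil (p^e-1)B\rceil + A)$ splits, which removes the quantifier over all Cartier divisors in the definition. Writing $N=\lceil(p^e-1)B\rceil+A$, I would then use a decomposition $F^e_*\sO_{\PP^1}(N)\cong\bigoplus_j \sO_{\PP^1}(d_j)$ into line bundles and observe that the structure map splits if and only if the canonical section $\sigma_e(1)=(s_j)_j$ has a component $s_j$ that is a nonzero constant in a summand with $d_j=0$ (if $d_j>0$ the retraction component vanishes, and if $d_j<0$ then $s_j=0$). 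By duality this ``unit component'' condition is exactly the statement that the evaluation functional $\Hom_{\sO}(F^e_*\sO(N),\sO)=H^0(\PP^1,\sO((1-p^e)K_{\PP^1}-N))\to k$, $\phi\mapsto\phi(1)$, is nonzero.

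Next I would secure the two inputs that make this condition satisfiable. For the existence of degree-$0$ summands, note that $\deg N=\sum_i\lceil(p^e-1)a_i\rceil+\deg A\sim (p^e-1)\sum_i a_i$, so the summand degrees cluster around $\deg N/p^e\to\sum_i a_i\in(0,2)$; since this limit lies in $(0,2)$, for $e\gg 0$ the value $0$ is among the two consecutive integers that occur, so a degree-$0$ summand is present. Crucially, the number of boundary points is bounded in terms of $I$ alone: since $I$ is finite and rational, $D(I)$ is a DCC set whose only accumulation point is $1$ and $D(I)\cap[0,1-\eta]$ is finite for every $\eta>0$, so $D(I)$ has a smallest positive element $\delta(I)>0$; the log Fano condition $\sum_i a_i<2$ together with $a_i\ge\delta(I)$ forces $m\le 2/\delta(I)$. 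It then remains to force the unit-component condition uniformly for $p>p_0(I)$.

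The main obstacle is precisely this uniformity, and the degree count alone does not deliver it, since global $F$-regularity genuinely depends on $p$. The delicate configurations are the ``barely log Fano'' ones with $\sum_i a_i\to 2$ and coefficients $a_i\to 1$, i.e.\ with unbounded denominators, which the bound on $m$ does not control. I would split the coefficients into those lying in the finite set $D(I)\cap[0,1-\eta]$ (contributing only finitely many possibilities) and at most two ``near-$1$'' coefficients of shape $1-1/m$, which can have arbitrarily large $m$; a near-$1$ coefficient comes from a single high-order orbifold point, which locally is a cyclic quotient and imposes only a rank-one condition on $\phi$ independent of $m$. The heart of the matter is then to choose $e$ adaptively---for instance with $p^e\equiv 1 \pmod{d}$ for the finitely many denominators $d$ forced by the non-near-$1$ coefficients, so that the fractional parts $\{(p^e-1)a_i\}$ vanish and each local contribution is maximally split---and to verify that the evaluation functional is then nonzero. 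The finitely many primes for which this fails (those dividing the relevant denominators, together with the orders attached to the sporadic configurations) depend only on $I$, and $p_0(I)$ is their maximum.

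For the standard coefficients $I=\emptyset$ this analysis specializes to the classification of log Fano orbifold structures $\sum_i(1-1/m_i)P_i$ on $\PP^1$, where $\sum_i(1-1/m_i)<2$. The cyclic families $\{m\}$ and $\{m_1,m_2\}$ and the dihedral family $\{2,2,m\}$ are essentially toric and are globally $F$-regular for every $p$, so only the three sporadic tetrahedral, octahedral and icosahedral cases $\{2,3,3\}$, $\{2,3,4\}$, $\{2,3,5\}$ remain; a direct check of the unit-component condition for each shows it holds precisely when $p\notin\{2,3,5\}$. Since $p>5$ excludes exactly these primes, one may take $p_0(\emptyset)=5$.
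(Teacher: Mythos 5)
Your reduction framework is sound as far as it goes: the Schwede--Smith criterion (it suffices to split $\sO_{\PP^1} \to F^e_*\sO_{\PP^1}(\lceil (p^e-1)B\rceil + A)$ for a single ample $A$ and a single $e$), the decomposition of $F^e_*\sO_{\PP^1}(N)$ into line bundles, the characterization of splitting as a unit component in a degree-zero summand, and the bound $m \le 2/\delta(I)$ on the number of boundary points are all correct. The genuine gap is exactly where you place ``the heart of the matter'': you never verify that the evaluation functional is nonzero, and the closing assertion that ``the finitely many primes for which this fails \dots depend only on $I$'' is precisely the statement to be proved, not something your construction delivers. Nothing in the proposal rules out that, for a fixed combinatorial type, the set of bad primes is infinite, or that it depends on the positions of the points: for four or more points the pair has moduli (cross-ratios), and non-vanishing of the evaluation functional is a condition modulo $p$ on coefficients of products $\prod_i (x-\lambda_i)^{k_i}$ that varies with those moduli; your adaptive choice of $e$ addresses only the fractional parts of the coefficients, not this dependence. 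Moreover, the congruence $p^e \equiv 1 \pmod{d}$ is unavailable for the near-$1$ coefficients, whose denominators $m$ are unbounded and may be divisible by $p$; and such coefficients have the shape $(m-1+s)/m$ with $s$ in a finite set determined by $I$, not $1-1/m$ as you assume. (A smaller slip: the dihedral family $(2,2,m)$ is not globally $F$-regular in characteristic $2$, though this does not affect the conclusion for $p>5$.)

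For comparison, the paper does not reprove this uniformity at all: its proof of Proposition \ref{CGS P1} consists of citing \cite[Theorem 4.1]{CGS} for the first assertion and \cite[Theorem 4.2]{Wa} for the bound $p_0(\emptyset)=5$, together with the observation that, since all the $P_i$ are $k$-rational, those proofs go through verbatim over an arbitrary $F$-finite field. So what you are attempting is a from-scratch proof of the Cascini--Gongyo--Schwede theorem, and the attempt stalls at exactly the point that makes that theorem nontrivial. Your $I=\emptyset$ analysis (cyclic, dihedral, and the three sporadic types, with the icosahedral case forcing $p>5$) is essentially Watanabe's argument and could be completed, for instance by tame descent from $\PP^1$ along the quotient by the relevant finite subgroup of $\mathrm{PGL}_2$ of order $12$, $24$, or $60$; but the general finite-$I$ case, as written, is not a proof.
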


\begin{proof}
The proof is essentially the same as in the case where $k$ is algebraically closed. 
When $k$ is algebraically closed, the first assertion is nothing but \cite[Theorem 4.1]{CGS} and the second one follows from \cite[Theorem 4.2]{Wa}.   
\end{proof}

\begin{defn}
A \emph{curve} $X$ over a field $k$ is a one-dimensional integral scheme of finite type over $k$. 
The \emph{arithmetic genus} $g(X)$ of a complete curve $X$ over $k$ is defined to be $\dim_k H^1(X, \sO_X)/ \dim_k H^0(X, \sO_X)$.  
We remark that $g(X)$ is independent of the choice of the base field $k$.
\end{defn}

We extend Proposition \ref{CGS P1} to the case of an arbitrary curve of genus zero. 
\begin{prop}\label{CGS genus0}
Let $I \subseteq [0,1] \cap \Q$ be a finite set.
There exists a positive constant $p_1(I)$ depending only on $I$ with the following property:
if $k$ is an $F$-finite field of characteristic $p>p_1(I)$ and $(C, B)$ is a log Fano pair such that $C$ is a complete curve over $k$ 
and the coefficients of $B$ belong to $D(I)$, then $(C, B)$ is globally $F$-regular. 
Moreover, when $I=\emptyset$, we may take $p_1(I)$ to be 5. 
\end{prop}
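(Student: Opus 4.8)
The plan is to reduce the case of an arbitrary genus-zero curve $(C, B)$ to the case of $\PP^1_k$ already handled in Proposition \ref{CGS P1}, by performing a base field extension that makes $C$ become $\PP^1$ over the larger field. The key observation is that a log Fano pair on $C$ forces $C$ to be a (possibly non-smooth, non-geometrically-connected) curve of arithmetic genus zero with ample anticanonical-type divisor; after passing to an appropriate field extension $k \subseteq k'$, the normalization of the base change $C_{k'} = C \times_k \Spec k'$ should become $\PP^1_{k'}$, and global $F$-regularity should descend along this faithfully flat extension.

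First I would show that the hypotheses force $C$ to be regular, hence smooth over a finite purely inseparable extension. Since $(C, B)$ is klt and $C$ is a curve, $C$ is regular (a normal curve is regular), and $-(K_C + B)$ ample with $g(C) = 0$ strongly constrains the situation. The natural move is to base change to $k' = H^0(C, \sO_C)$, the field of global functions, which is a finite extension of $k$; over $k'$ the curve becomes geometrically connected. I would then argue that $C_{k'}$ (or its normalization) is isomorphic to $\PP^1_{k'}$: a geometrically connected regular complete curve of arithmetic genus zero over a field admitting a $-(K+B)$-ample structure is $\PP^1$ once one finds a degree-one zero-cycle, which the $k'$-rational structure supplies. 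One must handle the boundary divisor: the irreducible components $B_i$ of $B$ pull back to sums of $k'$-points, and tracking the coefficients shows they still lie in $D(I)$ (possibly after re-applying the $D(\cdot)$ operation, whose output stays in $D(I)$ since $D(D(I)) \subseteq D(I)$). Thus $(\PP^1_{k'}, B')$ satisfies the hypotheses of Proposition \ref{CGS P1}, and $p_1(I) := p_0(I)$ works, with the bound $5$ preserved when $I = \emptyset$.

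The second ingredient is descent of global $F$-regularity. Having shown $(\PP^1_{k'}, B')$ is globally $F$-regular by Proposition \ref{CGS P1}, I would deduce that $(C, B)$ is globally $F$-regular. Since $k \subseteq k'$ is finite and $C_{k'} \to C$ is finite flat (indeed the relevant splittings can be pushed down), global $F$-regularity of the total space over the larger field descends: the splitting of $\sO_{C_{k'}} \to F^e_* \sO_{C_{k'}}(\dots)$ restricts/descends to a splitting over $C$, using that a direct summand of an $\sO_C$-module that splits after the faithfully flat base change $k \to k'$ also splits, together with $F$-finiteness guaranteeing the relevant $\Hom$-modules behave well.

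The step I expect to be the main obstacle is the field-extension argument itself, namely controlling what happens to $C$ and to the coefficients of $B$ under base change when $k$ is \emph{not} algebraically closed and $C$ need not be geometrically integral or geometrically regular. In positive characteristic a regular curve over an imperfect field can fail to be smooth, so $C_{k'}$ may acquire singularities or fail to be reduced, and the normalization map can alter the different/coefficients in a way that must be shown to stay within $D(I)$. Carefully choosing $k'$ (likely the separable or the full algebraic closure of $k$ inside $H^0(C,\sO_C)$, or a perfection step) so that the normalization of $C_{k'}$ is genuinely $\PP^1_{k'}$ with a $k'$-rational boundary, while simultaneously ensuring the $F$-regularity descent remains valid, is where the real work lies. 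The fact (established in Proposition \ref{CGS P1}) that only the combinatorial data $(I, p)$ governs the answer is what makes the reduction clean once these geometric subtleties are resolved.
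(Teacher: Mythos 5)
Your overall strategy (reduce to $\PP^1$ over a larger field and descend global $F$-regularity) is indeed the paper's strategy, but two of your key steps fail as stated. First, after replacing $k$ by $k'=H^0(C,\sO_C)$ (which is just a change of structure field; no tensor product is needed or wanted), the curve $C$ is a smooth conic in $\PP^2_{k'}$, and a conic over a non-closed field need not admit any degree-one zero-cycle: a pointless conic has index $2$, and the log Fano structure does not produce a rational point (for instance $B=\tfrac{1}{2}P$ with $P$ a degree-two closed point is perfectly consistent with $\deg B<2$). So your assertion that ``the $k'$-rational structure supplies'' a degree-one cycle is false, and $C$ need not be $\PP^1_{k'}$. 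The paper instead takes a closed point $P$ of degree $2$, passes to $l=k(P)$, and uses that $p>2$ makes $C_l\to C$ finite \'etale of degree prime to $p$, so that $C_l\cong\PP^1_l$ and global $F$-regularity can be transferred via the affine-cone correspondence (\cite[Proposition 5.3]{SS}) together with the behavior of strong $F$-regularity under finite \'etale morphisms of degree prime to $p$ (\cite{HT}, \cite{ST}). Your proposed descent mechanism --- base changing a Frobenius splitting along the faithfully flat map $k\to k'$ --- also does not work as stated, because the absolute Frobenius of $C\times_k k'$ is not the base change of the Frobenius of $C$ unless $k'$ is perfect; this is precisely why the paper routes everything through finite \'etale morphisms of degree prime to $p$ rather than raw flat base change.

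Second, your constant $p_1(I):=p_0(I)$ is not sufficient. To make the points of $\Supp B$ rational one must pass to a Galois extension containing all the residue fields $k(P_i)$, and for the descent to apply this extension must have degree prime to $p$; in particular each $k(P_i)$ must be separable over $k$. The paper arranges this by setting $p_1(I)=\max\{p_0(I),\lceil 2/a(I)\rceil\}$, where $a(I)=\min\{a\in D(I)\mid a\neq 0\}$: since $\deg B<2$, each $[k(P_i):k]<2/a(I)<p$, which forces separability (an inseparable extension has degree divisible by $p$) and gives a Galois closure of degree prime to $p$. With $p_0(I)$ alone, a boundary point could have residue field of degree divisible by $p$, or inseparable over $k$, and then no version of the descent you invoke applies. (For $I=\emptyset$ one has $a(I)=1/2$ and $\max\{5,4\}=5$, which is exactly where the bound $5$ in the statement comes from.) A minor further point: under these base changes the coefficients of $B$ simply pull back unchanged, so no $D(\cdot)$-operation is needed there; and in any case $D(D(I))=D(I)\cup\{1\}$ by \cite[Lemma 4.4]{MP}, not $D(D(I))\subseteq D(I)$.
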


\begin{proof}
Let $p_0(I)$ be the constant as in Proposition \ref{CGS P1}.
Set $a(I)= \min\{a  \in D(I) \, | \, a \ne 0 \}$ and $p_1(I)= \max \{ p_0, \lceil 2/a(I) \rceil \}$. 
If $I=\emptyset$, then $a(I)=1/2$, so we can take $p_1(\emptyset)=5$ by Proposition \ref{CGS P1}. 
We say that a pair $(C, B)$ satisfies the condition $(\star)$ if $C$ is a complete curve over an $F$-finite field of characteristic $p>p_1(I)$ 
and $(C, B)$ is a log Fano pair such that the coefficients of $B$ belong to $D(I)$. 
Note that if $(C, B)$ satisfies $(\star)$, then $C$ is a complete regular curve with $g(C)=0$ by \cite[Corollary 2.6]{Ta}.

Let $(C, B)$ be a log Fano pair over a field $k$ satisfying the condition $(\star)$. 
First, we will show that we can reduce to the case where $C \cong \PP^1_k$. 
Replacing $k$ by its finite extension if necessary, we may assume that $k=H^0(C, \sO_C)$.
Suppose that $C \not\cong \PP^1_k$. 
Then $C$ is isomorphic to a conic in $\PP^2_k$ by \cite[Lemma 10.6 (3)]{Ko}, and hence there exists a closed point $P \in C$ whose residue field $k(P)$ is a quadratic extension of $k$. 
Let $l := k(P)$ and consider the pullback $f : C_l:= C \times_{\Spec k} \Spec l \to C$ of $C$.  
Note that $C_l$ is connected, because $[l : k]=2$ and $l$ is not contained in the function field $K(C)$ of $C$, so $K(C) \otimes_k l$ is a domain. 
Since $p > p_1 >2= [l :k]$, the pullback $f$ is a surjective \'{e}tale morphism. 
It follows from \cite[Lemma 10.6 (3)]{Ko} again, together with the fact that a regular conic with an $l$-rational point is isomorphic to $\PP^1_l$, that $C_l \cong \PP^1_l$. 
By the fact that being klt is preserved under surjective \'{e}tale morphisms (\cite[Proposition 2.15]{Ko}), the pair $(C_l, B_l:=f^*B)$ satisfies the condition $(\star)$.  
On the other hand, a pair is globally $F$-regular if and only if its affine cone is strongly $F$-regular (\cite[Proposition 5.3]{SS}), and strong $F$-regularity is preserved under finite \'{e}tale morphisms of degree not divisible by $p$ (\cite[Theorem 3.3]{HT}, \cite[Corollary 6.31 and Proposition 7.4]{ST}). 
Therefore, $(C, B)$ is globally $F$-regular if and only if so is $(C_l, B_l)$. 
Thus, replacing $k$ by $l$ and $C$ by $C_l$, we may assume that $C = \PP^1_k$.

Next, we will show that we can reduce to the case where every point in $\Supp \, B$ is a $k$-rational point. 
Let $B= \sum_{i} a_i P_i$ be the irreducible decomposition of $B$.
Since $(C=\PP^1_k,B)$ is log Fano, we have $\sum_{i} a_i [k(P_i) : k]=\deg_{\PP^1_k/k}(B) < 2$.
In particular, $[k(P_i) : k] < 2/ a(I) \le p_1(I)<p$ for every $i$.
Hence, there exists a Galois extension $K/k$ such that $k(P_i) \subseteq K$ for every $i$ and $p$ does not divide $[K:k]$.
Since the condition $(\star)$ and global $F$-regularity are preserved under finite Galois base field extensions of degree not divisible by $p$ as we have seen above, 
replacing $k$ by $K$ and $(\PP^1_k, B)$ by $(\PP^1_K, B_K)$ if necessary, we may assume that every point in $\Supp \, B$ is a $k$-rational point. 
The assertion is now an immediate consequence of Proposition \ref{CGS P1}.
\end{proof}

\begin{lem}[\textup{cf.~\cite[Proposition 2.13]{CGS}}]\label{Kollar comp}
Let $(x \in X)$ be a normal surface singularity with a dualizing complex $\omega_X^{\bullet}$ and $B$ be an effective $\Q$-divisor on $X$ such that $K_X+B$ is $\Q$-Cartier. 
If $(x \in X, B)$ is klt, then there exists a proper birational morphism $f : Y \to X$ from a normal surface $Y$ such that 
\begin{enumerate}
\item[$(1)$] the exceptional locus $C$ of $f$ is a complete regular curve with $g(C)=0$,
\item[$(2)$] $(Y, B_Y+C)$ is plt, where $B_Y = f^{-1}_*B$ is the strict transform of $B$ by $f$, and
\item[$(3)$] $-(K_Y + B_Y +C)$ is $f$-ample, where $K_Y$ is a canonical divisor on $Y$ associated to the dualizing complex $f^! \omega_X^\bullet$. 
\end{enumerate}
\end{lem}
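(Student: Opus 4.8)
The plan is to produce a single Kollár component over $x$. After replacing $X$ by $\Spec \sO_{X,x}$ we may assume that $(x \in X)$ is a surface singularity. I would fix a log resolution $\mu\colon W \to X$ of $(X,B)$ with exceptional prime divisors $E_1,\dots,E_n$ and record that $K_W + \mu^{-1}_*B = \mu^*(K_X+B) + \sum_i a_i E_i$ with all $a_i > -1$, since $(x \in X, B)$ is klt. The geometric input that frees us from the algebraically closed hypothesis is that any subset of $\{E_1,\dots,E_n\}$ has negative-definite intersection matrix by Proposition \ref{intersection}(3); hence, after selecting one index $0$, the configuration $\bigcup_{i\ne 0}E_i$ can be contracted to a normal surface $Y$, giving a proper birational morphism $f\colon Y\to X$ whose exceptional locus is a single regular curve $C$, the image of $E_0$ (by the contractibility of negative-definite configurations, valid in this framework via \cite{Ko}). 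The whole problem thus reduces to choosing $E_0$ so that $(Y, \mu^{-1}_*B|_Y + C)=(Y, B_Y+C)$ is plt and $-(K_Y + B_Y + C)$ is $f$-ample.

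To single out $E_0$ I would use a tie-breaking argument. Let $D$ be a general effective $\Q$-Cartier divisor through $x$ and let $c>0$ be the log canonical threshold of $D$ with respect to $(X,B)$, so that $\Gamma := B + cD$ makes $(X,\Gamma)$ log canonical but not klt at $x$. Perturbing $D$ and, if necessary, adding a further general divisor to break ties, I can arrange that $(X,\Gamma)$ has a \emph{unique} log canonical place $E_0$ over $x$ and is klt elsewhere. This is the technical heart of the proof, and the step I expect to be the main obstacle: producing a unique log canonical place and verifying the attendant discrepancy estimates entirely within the excellent, not necessarily geometrically closed, surface framework of \cite{Ko} and \cite{Ta}, rather than appealing to classical surface theory over an algebraically closed field.

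Granting this, I take $f\colon Y\to X$ to be the contraction of $\bigcup_{i\ne 0}E_i$ above, with $C$ the image of $E_0$. Since $E_0$ is the unique log canonical place of $(X,\Gamma)$ and every other exceptional divisor has discrepancy $>-1$, the pair $(Y, f^{-1}_*\Gamma + C)$ is plt and $K_Y + f^{-1}_*\Gamma + C = f^*(K_X+\Gamma)$ is crepant. Because $B_Y + C \le f^{-1}_*\Gamma + C$ and plt is preserved under decreasing the boundary (the discrepancies of Definition \ref{mmp} only increase), $(Y, B_Y + C)$ is plt, which in particular forces $C$ to be regular. For condition (3), writing $f^{-1}_*\Gamma = B_Y + c\,f^{-1}_*D$ gives
\[
-(K_Y + B_Y + C) = -(K_Y + f^{-1}_*\Gamma + C) + c\,f^{-1}_*D,
\]
whose intersection with the single exceptional curve $C$ equals $c\,(f^{-1}_*D\cdot C)$, since the first term is $f$-numerically trivial. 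As $f^{-1}_*D$ is the strict transform of a general divisor through $x=f(C)$, it meets $C$, so $-(K_Y + B_Y + C)\cdot C>0$; because $C$ is the only $f$-exceptional curve and its self-intersection is negative, $f$-ampleness follows.

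Finally, for (1), adjunction along $C$ yields $(K_Y + B_Y + C)|_C = K_C + \mathrm{Diff}_C(B_Y)$ (the different used in Lemma \ref{adj coeff}, following \cite{Ko}). Since $-(K_Y + B_Y + C)$ is $f$-ample and $C$ is contracted to $x$, we obtain $\deg_{C/k(x)}(K_C + \mathrm{Diff}_C(B_Y)) < 0$; as the different is effective, this gives $\deg_{C/k(x)} K_C < 0$, and hence $g(C)=0$ by the genus bound invoked in Proposition \ref{CGS genus0} (see \cite{Ta}). This establishes (1)–(3) and completes the construction.
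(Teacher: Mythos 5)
Your proposal takes a genuinely different route from the paper's. The paper follows \cite[Proposition 2.13]{CGS} essentially verbatim: on a log resolution $W$ of $(X,B)$ one runs a relative MMP over $X$ for the boundary $\pi^{-1}_*B+\sum_i E_i$, with Tanaka's MMP for excellent surfaces \cite[Theorem 1.1]{Ta} replacing the MMP over an algebraically closed field, so that (2) and (3) come out of the MMP; (1) is then deduced from plt-ness, since $C$ is regular by \cite[Theorem 2.31]{Ko} and $Y$ has rational singularities by Proposition~\ref{klt rational}, whence $g(C)=0$ by \cite[Lemma 10.8 (3)]{Ko}. Your tie-breaking construction (make a chosen $E_0$ the unique log canonical place of an auxiliary pair $(X,\Gamma)$ and contract everything else) is a known alternative, and your verifications of (2), (3), and of $g(C)=0$ via adjunction and the log Fano genus bound are essentially sound. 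But two load-bearing steps are missing, and they sit exactly where the paper instead leans on \cite{Ta}.

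The serious gap is the contraction step. You contract $\bigcup_{i\neq 0}E_i$ by appealing to ``contractibility of negative-definite configurations, valid in this framework via \cite{Ko}.'' There is no such theorem: negative definiteness alone does not guarantee that an exceptional configuration can be contracted \emph{in the category of schemes} --- this is Artin's classical phenomenon, already over $\C$, where configurations containing curves of positive genus need not be contractible to a scheme. Proposition~\ref{intersection}(3) gives you numbers, not a morphism. What this framework actually provides is the contraction of $(K+\Delta)$-negative extremal rays (Tanaka's MMP, \cite[Theorem 1.1]{Ta}) and contractibility of \emph{rational} configurations in the spirit of Artin and Lipman \cite{Ar}, \cite{Li}. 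Your configuration is indeed contractible, but for a reason you never invoke: either run the MMP of the dlt pair $(W,\pi^{-1}_*\Gamma+\sum_i E_i)$ over $X$, which by the negativity lemma contracts exactly the $E_i$ with $i\neq 0$ --- at which point your argument has become the paper's --- or observe that $(x\in X)$ is a rational singularity by Proposition~\ref{klt rational}, so every exceptional configuration over $x$ is rational and the rationality-based contraction results apply. Second, the tie-breaking step producing a unique lc place, which you yourself flag as the main obstacle, is left undone and is genuinely delicate here: the residue field $k(x)$ may be finite or imperfect, so the usual ``general member'' perturbation arguments from the algebraically closed setting require real justification. The paper's MMP route avoids both issues entirely, since it needs no choice of $E_0$ and no auxiliary divisor $D$.
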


\begin{proof}
We can apply essentially the same argument as in the case where $X$ is defined over an algebraically closed field \cite[Proposition 2.13]{CGS}, using the minimal model program for excellent surfaces \cite[Theorem 1.1]{Ta} instead of that for surfaces over an algebraically closed field. 
Then (2) and (3) are easily verified. 
Hence, it is enough to check (1). 
Since $(Y, B_Y+C)$ is plt, the complete curve $C$ has to be regular by \cite[Theorem 2.31]{Ko}. 
The pair $(Y, B_Y+C)$ being plt also implies by Proposition \ref{klt rational} that $Y$ has only rational singularities, from which it follows that $g(C)=0$ (see \cite[Lemma 10.8 (3)]{Ko}). 
\end{proof}

Theorem \ref{key thm} is a consequence of Proposition \ref{F-reg klt} and the following theorem, which gives a generalization of \cite[Theorem 1.1]{CGS} (see also \cite{Ha}) to the case where the base field is not necessarily algebraically closed. 
\begin{thm}[\textup{cf.~\cite{Ha}, \cite[Theorem 1.1]{CGS}}]\label{klt F-reg}
Let $I \subseteq [0,1] \cap \Q$ be a finite set.
There exists a positive constant $p_1(I)$ depending only on $I$ with the following property:
if $(x \in X)$ be an $F$-finite normal surface singularity of characteristic $p>p_1(I)$ and $(x \in X, B)$ is a klt pair such that the coefficients of $B$ belong to $D(I)$,
then $(x \in X, B)$ is strongly $F$-regular.
Moreover, when $I= \emptyset$, we can take $p_1(I)$ to be 5.
\end{thm}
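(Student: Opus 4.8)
The plan is to run the strategy of \cite[Theorem 1.1]{CGS}, the key point being that the only place where the residue field $k(x)$ fails to be algebraically closed can be quarantined inside Proposition \ref{CGS genus0}, which has already disposed of it. Accordingly, I would take $p_1(I)$ to be exactly the constant furnished by Proposition \ref{CGS genus0}; the assertion $p_1(\emptyset)=5$ is then inherited verbatim. Since strong $F$-regularity is local, I may assume $X=\Spec R$ with $(R,\m)$ an $F$-finite normal local ring of dimension two, so that $R$ is excellent, has a dualizing complex, and has $F$-finite residue field $k(x)$. I also record that, being klt, $(x\in X)$ is a rational singularity by Proposition \ref{klt rational}, a fact I will need for the cohomological vanishing later.

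First I would extract a Koll\'ar component. Applying Lemma \ref{Kollar comp} to the klt pair $(x\in X,B)$ produces a proper birational morphism $f:Y\to X$ whose exceptional locus $C$ is a complete regular curve with $g(C)=0$, such that $(Y,B_Y+C)$ is plt for $B_Y=f^{-1}_*B$ and $-(K_Y+B_Y+C)$ is $f$-ample. Writing $B_C:=\mathrm{Diff}_C(B_Y)$, the adjunction formula gives $(K_Y+B_Y+C)|_C=K_C+B_C$, so restricting the $f$-ample divisor $-(K_Y+B_Y+C)$ to the contracted curve $C$ shows that $-(K_C+B_C)$ is ample. The plt property of $(Y,B_Y+C)$ restricts along $C$ to the klt property of $(C,B_C)$, whence every coefficient of $B_C$ is $<1$; by Lemma \ref{adj coeff} these coefficients are of the hyperstandard shape controlled by $I$, and being below $1$ they lie in $D(I)$. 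Thus $(C,B_C)$ is a log Fano pair over the $F$-finite field $k(x)$ with coefficients in $D(I)$, and since $p>p_1(I)$, Proposition \ref{CGS genus0} shows that $(C,B_C)$ is globally $F$-regular.

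The crux is to descend this global $F$-regularity of the Koll\'ar component to strong $F$-regularity of $(x\in X,B)$, and I would do this by lifting Frobenius splittings. By the test-element criterion it suffices to produce, for one suitably chosen nonzero $c\in R$, an integer $e$ for which $\sO_X\to F^e_*\sO_X(\lceil(p^e-1)B\rceil+\mathrm{div}(c))$ splits. Working on $Y$, I would choose a Frobenius-twisted divisor $D_e$ whose restriction to $C$ realizes $\lceil(p^e-1)B_C\rceil$ together with an ample remainder coming from $c$; global $F$-regularity of $(C,B_C)$ then yields a splitting of the $e$-th Frobenius on $C$ twisted by $D_e|_C$. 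The $f$-ampleness of $-(K_Y+B_Y+C)$ makes $D_e-C$ sufficiently $f$-positive that relative Serre vanishing kills the obstruction group $H^1$, so the splitting on $C$ lifts to $Y$; pushing forward by $f$ and using $f_*\sO_Y=\sO_X$ delivers the splitting on $X$. Here the rationality of $(x\in X)$ and $g(C)=0$ ensure that no stray higher cohomology obstructs the lifting.

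I expect this descent to be the main obstacle, precisely as in the algebraically closed case of \cite{CGS}: one must calibrate the twisting divisors $D_e$ on $Y$ so that their restriction to $C$ is exactly the log Fano anticanonical data, while keeping $D_e-C$ positive enough to drive the vanishing that powers the lifting. The decisive structural point is that every cohomological input to this step---relative vanishing from $f$-ampleness, rationality of $Y$, and $g(C)=0$---is insensitive to whether $k(x)$ is algebraically closed. Consequently, once Proposition \ref{CGS genus0} has secured global $F$-regularity of the component over the possibly imperfect field $k(x)$, the remainder of the argument runs in parallel with the classical one, and the characteristic bound is carried over unchanged from Proposition \ref{CGS genus0}.
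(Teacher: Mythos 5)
Your proposal follows the paper's proof essentially step for step: take $p_1(I)$ from Proposition \ref{CGS genus0}, extract a Koll\'ar component via Lemma \ref{Kollar comp}, use Lemma \ref{adj coeff} (plus the fact $D(D(I))=D(I)\cup\{1\}$ and klt-ness) to place the coefficients of $\mathrm{Diff}_C(B_Y)$ in $D(I)$, invoke Proposition \ref{CGS genus0} for global $F$-regularity of $(C,B_C)$, and descend to strong $F$-regularity of $(x\in X,B)$ by the splitting-lifting argument of \cite[Proposition 2.11]{CGS} and \cite[Lemma 2.12]{HX}. The descent step you sketch is exactly what those cited results supply, so the argument is correct and coincides with the paper's.
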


\begin{proof}
We employ the same strategy as the proof of \cite[Theorem 1.1]{CGS}.
Let $p_1(I)$ be the constant as in Proposition \ref{CGS genus0}. 
Suppose that $(x \in X, B)$ is a klt pair such that $(x \in X)$ is an $F$-finite normal surface singularity of characteristic $p>p_1(I)$ and the coefficients of $B$ belong to $D(I)$. 
Take a proper birational morphism $f: Y \to X$ with exceptional prime divisor $C$ as in Lemma \ref{Kollar comp}, and let $B_Y=f^{-1}_* B$ be the strict transform of $B$ by $f$ and $B_C=\mathrm{Diff}_C(B_Y)$ be the different of $B_Y$ on $C$. 
Note that by Lemma \ref{adj coeff} and \cite[Lemma 4.4]{MP}, the coefficients of $B_C$ belong to $D(D(I))=D(I) \cup \{1\}$.
Since $(C, B_C)$ is log Fano by adjunction (\cite[Theorem 3.36 and Lemma 4.8]{Ko}), 
the coefficients of $B_C$ are less than 1, and then $(C, B_C)$ is globally $F$-regular by Proposition \ref{CGS genus0}. 
Applying the same argument as in the case where $X$ is defined over an algebraically closed field (\cite[Proposition 2.11]{CGS} and \cite[Lemma 2.12]{HX}), we can conclude that $(X, B)$ is strongly $F$-regular. 
\end{proof}

\begin{eg}[\textup{cf.~\cite[2.26]{Ko}}]
Let $k$ be a non-algebraically closed field of characteristic $p>0$ with an element $a \in k$ that is not a  cubic power in $k$.  
Let $(\mathbf{0} \in X)$ be the origin of the hypersurface $(x^2=y^3-az^3+y^4+z^4) \subset \mathbb{A}_k^3$, which is a klt surface singularity. 
Its minimal resolution has two exceptional curves $C_1, C_2$ and their dual graph is 
\vspace*{-0.25em}
\[
\xygraph{
[]*=[o]++[Fo]{{2}} ([]!{+(0,+.25)} {{}^1})
- @3{-}[r]*=[o]++[Fo]{{2}}([]!{+(0,+.25)} {{}^3}) 
} \, ,
\]
where the numbers above the circles are the $\dim_k H^0(C_i, \sO_{C_i})$ and the numbers inside the circles are the $-C_i^2/\dim_k H^0(C_i, \sO_{C_i})$. 
Note that this graph does not appear in the list of the dual graphs of exceptional curves for the minimal resolutions of klt surface singularities over an algebraically closed field (see \cite{Wk}).  
On the other hand, $(\mathbf{0} \in X)$ is strongly $F$-regular if and only if $p >3$. 
\end{eg}

We are now ready to prove the klt case of the main theorem. 
\begin{thm}\label{log terminal}
Let $X$ be a three-dimensional normal quasi-projective variety over an algebraically closed field of characteristic $p > 5$ and $\Delta$ be an effective $\Q$-divisor on $X$ whose coefficients belong to the standard set $\{1-1/m \, | \, m \in \Z_{\ge 1}\}$. 
Suppose that $K_X+\Delta$ is $\Q$-Cartier and $(X, \Delta)$ is klt. 
Then $(H, \Delta|_H)$ is klt for a general hyperplane section $H$ of $X$. 
\end{thm}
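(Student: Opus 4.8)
The plan is to run the same scheme as in the proof of Theorem~\ref{canonical}, replacing MJ-canonicity by strong $F$-regularity as the $F$-singularity-theoretic counterpart of klt, and replacing the Ishii--Reguera Bertini theorem by the Schwede--Zhang Bertini theorem for strongly $F$-regular singularities \cite{SZ}. The essential new input that makes this work is Theorem~\ref{key thm}, whose very purpose is that at a codimension two point $x$ of $X$ the residue field $k(x)$ is \emph{not} algebraically closed, so the classical comparison between klt and strong $F$-regularity over an algebraically closed field does not apply directly.

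First I would reduce to the codimension two points. Since $X$ is of finite type over the $F$-finite field $k$, it is $F$-finite, and so is every local ring $\sO_{X,x}$. Fix a codimension two point $x \in X$, so that $\dim \sO_{X,x}=2$. Because $(X,\Delta)$ is klt, its localization $(\Spec \sO_{X,x}, \Delta_x)$ is a klt $F$-finite normal surface singularity (Definition~\ref{mmp}(ii)), and the coefficients of $\Delta_x$ still lie in the standard set. As $p>5$, we may take the constant $p_1(\emptyset)=5<p$, so Theorem~\ref{key thm} shows that $(\Spec \sO_{X,x}, \Delta_x)$ is strongly $F$-regular. Hence $(X,\Delta)$ is strongly $F$-regular at every point of codimension at most two (the codimension $\le 1$ points are covered because $X$ is regular in codimension one and the coefficients of $\Delta$ are $<1$).

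Next I would globalize. The strongly $F$-regular locus of the pair $(X,\Delta)$ is an open subset $U \subseteq X$, and by the previous step $U$ contains every point of codimension at most two; therefore $X \setminus U$ is a closed set containing no such point, hence a finite set of closed points. A general hyperplane section $H$ of $X$ avoids these finitely many points, so $H \subseteq U$ and $(U, \Delta|_U)$ is strongly $F$-regular. Applying the Bertini theorem for strongly $F$-regular singularities \cite{SZ} to $(U,\Delta|_U)$, we find that $(H, \Delta|_H)=(H\cap U, (\Delta|_U)|_{H\cap U})$ is strongly $F$-regular for general $H$; here $H$ is normal since strong $F$-regularity implies normality, and $K_H+\Delta|_H$ is $\Q$-Cartier by adjunction, because $K_X+\Delta$ is $\Q$-Cartier and $H$ is Cartier. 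Finally, Proposition~\ref{F-reg klt} converts strong $F$-regularity back into kltness, giving that $(H,\Delta|_H)$ is klt.

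The genuinely delicate point is already absorbed into Theorem~\ref{key thm}; within this argument the steps that need care are, first, the openness of the strongly $F$-regular locus of a pair over an $F$-finite scheme, which guarantees that the bad locus is zero-dimensional, and second, the bookkeeping that lets a single general $H$ simultaneously avoid the finitely many non-strongly-$F$-regular closed points and satisfy the conclusion of the Schwede--Zhang theorem on $U$. Both of these are intersections of conditions each holding for a general hyperplane, so they combine without difficulty; the conceptual obstacle, namely comparing klt and strong $F$-regularity over the non-closed residue fields $k(x)$, was exactly the content handled by Theorem~\ref{key thm}.
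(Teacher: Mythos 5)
Your proposal is correct and follows essentially the same route as the paper: reduce to codimension two points via Theorem \ref{klt F-reg} (with $I=\emptyset$, $p_1(\emptyset)=5$), use openness of the strongly $F$-regular locus to conclude the bad set is a finite set of closed points avoided by a general $H$, apply the Schwede--Zhang Bertini theorem \cite[Corollary 6.6]{SZ}, and convert back to klt via Proposition \ref{F-reg klt}. The extra remarks you include (codimension $\le 1$ points, normality of $H$, adjunction for $K_H+\Delta|_H$) are harmless elaborations of steps the paper leaves implicit.
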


\begin{proof}
Let $U$ be the locus of the points $x \in X$ such that $(X, \Delta)$ is strongly $F$-regular. 
Note that $U$ is an open subset of $X$ (the $\Delta=0$ case was proved in \cite[Theorem 3.3]{HH} and the general case follows from a very similar argument). 
Every codimension two point $x \in X$ lies in $U$, because $(\Spec \sO_{X, x}, \Delta_x)$ is strongly $F$-regular by Theorem \ref{klt F-reg}. 
Hence, $X \setminus U$ consists of only finitely many closed points and a general hyperplane section $H$ of $X$ is contained in $U$. 
It then follows from a Bertini theorem for strongly $F$-regular pairs \cite[Corollary 6.6]{SZ} that $(H, \Delta|_H)$ is strongly $F$-regular. 
Since strongly $F$-regular pairs are klt by Proposition \ref{F-reg klt}, we obtain the assertion. 
\end{proof}

%%%%%%%%%%%%%%%%%%%%%%%%%%%%%%%%%%%%%%%%%%%%%%%%%%%%%%%%%%%%%%%%%%%%%%%%%%%%%%

\end{document}